\documentclass[12pt]{article}

\usepackage{ucs}
\usepackage[utf8x]{inputenc}
\usepackage[UKenglish]{babel} 
\usepackage{babelbib} 
\usepackage{amsmath} 
\usepackage{amsthm} 
\usepackage{amssymb}
\usepackage{graphicx}
\usepackage{pdfpages}
\usepackage{verbatim}
\usepackage{color}
\usepackage{paralist}
\usepackage{bbm}
\usepackage{url}
\usepackage{hyperref}
\usepackage{float}
\usepackage{mathtools}
\usepackage{amsthm}
\usepackage{datetime}
\usepackage{setspace}
\usepackage{booktabs}
\usepackage{multirow}
\usepackage[a4paper, left=2.5cm, right=2.5cm, top=2.5cm,bottom=2.5cm]{geometry}

\newtheorem{theorem}{Theorem}
\newtheorem{lemma}{Lemma}
\newtheorem{definition}{Definition}
\newtheorem{proposition}{Proposition}

\newtheorem{remark}{Remark}

\newcommand{\E}{\mathbb{E}}

\newcommand{\R}{\mathbb{R}}

\onehalfspacing

\title{Time-inconsistent view on a dividend problem with penalty}
\date{2022\\ April}
\author{Josef Anton Strini\thanks{Graz University of Technology, Institute of Statistics, Kopernikusgasse 24/III, 8010 Graz, Austria} \thanks{Contact: j.strini@tugraz.at} \and Stefan Thonhauser\footnotemark[1]  \thanks{Stefan Thonhauser received support by the Austrian Science Fund (FWF) Project P 33317.}}

\begin{document}

\maketitle

\begin{abstract}
We consider the dividend maximization problem including a ruin penalty in a diffusion environment. The additional penalty term is motivated by a constraint on dividend strategies. Intentionally, we use different discount rates for the dividends and the penalty, which causes time-inconsistency. This allows to study different types of constraints. For the diffusion approximation of the classical surplus process we derive an explicit equilibrium dividend strategy and the associated value function.
Inspired by duality arguments, we can identify a particular equilibrium strategy such that for a given initial surplus the imposed constraint is fulfilled. Furthermore, we illustrate our findings with a numerical example.\\

\textbf{Keywords:} Ruin theory; dividends; time-inconsistent stochastic control; extended Hamilton-Jacobi-Bellman equation
\end{abstract}

\section{Introduction}
The dividend maximization problem and many extensions of it are widely investigated in actuarial science and in particular in its risk theoretic branch. Under suitable conditions one can detect the optimal control in prespecified models. 
For example, in classical diffusion settings optimal dividend strategies are of constant barrier type, cf.~\cite{Shre} or \cite{AsmussenTaksar1997}. If using such a reflecting barrier strategy, this has the consequence of almost sure ruin.\\
On the other hand, one way to totally avoid ruin is the usage of capital injections, cf.~\cite{KuSchm}, one of the most renowned recent extensions of the problem. While these results are theoretically remarkable, they seem not to be satisfying if one aims at a balanced situation between risk and profit.
So there exist two extremal cases: on the one hand maximizing the output regardless of consequences - always having in mind that this will cause negative social impacts on the insured, employees, the public reputation of the insurance company and its management. On the other hand, keeping a business uncompromisingly alive using capital injections, may be in conflict with owner interest.\\
For this reason, we want to identify a dividend strategy which suffices a particular tradeoff between ruin and profit.
Naturally, one way of tackling this problem is to consider the dividend problem under a ruin constraint. That is maximizing expected discounted dividends over a set of strategies for which the ruin probability for fixed initial surplus stays below a given level. This approach goes back to \cite{Hipp2003}. Recent results in this direction are presented in \cite{Hipp2018a, Hipp2018b,Hipp2020} and mentioned in a talk by Brandon Israel Garc\'{\i}a Flores\footnote{\emph{Genetic Algorithms in Risk Theory: Optimizing Dividend Strategies} at {IME} conference 2021}. On a finite time horizon such a constrained dividend maximization problem is analyzed in \cite{Grandits2015}.\\
For dealing with the ruin probability constraint in the first place and for achieving a better understanding of the evolution of such constraints in the second place, we present a time-inconsistent variation or extension of \cite{Hernandez2015} in the following.\\
There are different origins of time-inconsistency in the dividend maximization problem. For example, \cite{Zhao2014} deal with the problem using non-exponential discounting, whereas \cite{ChrisLind2021} arrive at time(space)-inconsistency when adding a moment constraint on the number of expected dividend payments.\\

\subsection{Specification of the model}

In the following we will consider the surplus process $X=(X_t)_{t\geq 0}$ with the  dynamics
\begin{equation}\label{SDE}
dX_t = \mu (t,X_t) dt + \sigma(t,X_t) dW_t,
\end{equation}
where $\mu$ and $\sigma$ are deterministic functions such that \eqref{SDE} is well-posed and $W=(W_t)_{t\geq 0}$ is a standard Brownian motion on a probability space $(\Omega,\mathcal{F},\mathcal{P})$. The flow of information is given by the $\mathcal{P}$-completed Brownian history, denoted by $\{\mathcal{F}_t\}_{t\geq 0}$. 
Exemplary conditions which ensure that \eqref{SDE} admits a strong solution are the classical linear growth and Lipschitz conditions for $\mu$ and $\sigma$, see \cite[p.~289, Theorem 2.9]{KaSh}. Or, the more general requirements from \cite[p.~142, Theorem 4]{Zvonkin1974}, which are:
\begin{itemize}
\item $\mu$ is bounded and measurable,
\item  $\sigma$ is bounded from above and away from zero, is continuous in $(t,x)$ and is H{\"o}lder continuous with parameter larger or equal than $1/2$ with respect to $x$ for every $t$.
\end{itemize}
We subsequently refer to these latter conditions as conditions $(Z)$.\\
The task in the classical problem, cf.~\cite{Shre}, is to choose the dividend process  $L=(L_t)_{t\geq 0}$ in an optimal way, such that the discounted expected future dividends are maximized. The resulting value function is
\begin{equation}\label{valuefunction}
\sup_{L} \E_{x}\left[\int\limits_0^{\tau^L} e^{-\delta s} dL_s \right],
\end{equation}
where the supremum is taken over all processes $L$ which are non-decreasing, adapted, càglàd with $L_0=0$ and $\Delta L_t \leq X^L_{t-}$. The controlled surplus is
\begin{equation}\label{SDE controlled}
dX^L_t = \mu (t,X^L_t) dt + \sigma(t,X^L_t) dW_t- dL_t.
\end{equation}
For assumptions on $\mu$ and $\sigma$ such that \eqref{valuefunction} is well-posed, we  refer to \cite{Shre}.\\

In order to penalize early ruin \cite{ThonAlb2007} added the term $\Lambda e^{-\delta \tau^L}$ to the reward function. Based on their work, \cite{Hernandez2015} linked the penalty term to a constraint on $\E[e^{-\delta \tau^L}]$ in order to solve the associated constrained problem for a given initial surplus. Motivated by these previous results and by a comment in the recent contribution \cite[p.~636, Remark 2.1]{Juncaetal2019} on an extension with different discount rates, we want to analyze the following reward function for a $\lambda\leq 0$
$$\E_{t,x}\left[\int\limits_t^{\tau^L} e^{-\delta (s-t)} dL_s \right] + \lambda\left(  \E_{t,x}\left[e^{-\beta(\tau^L-t)} \right] - \alpha \right).$$
This integrates the constraint
\begin{equation}\label{constraint}
\E_{t,x}\left[e^{-\beta(\tau^L-t)} \right] \leq \alpha.
\end{equation}
Here $\beta$ and $\delta$ are positive, not necessarily equal constants, $0<\alpha \leq 1$ and $\tau^L=\tau^L(t,x)=\inf\{s\geq t| X^L_s \leq 0\,, X^L_t=x\}$, for a process $X^L$ starting in $x$ at time $t\geq 0$.\\
We consider the following variant of this problem. We assume that $dL_t = l(t,X^{L}_t) dt$ and we define the set of admissible controls $\Theta$ to be the measurable controls of feedback type $l:[0,\infty)^2 \to [0,L_{max}]$. The bound $L_{max}>0 $ is a prespecified constant.
The main purpose of considering controls of feedback type is due to the fact that we are facing a time-inconsistent stochastic control problem in the framework of \cite[p.~334]{Bjoerk2017} and \cite{ChrisLind2021}. 
Note that as in the classical time-consistent case, one could try to approach the singular control problem, where the dividend rate is unbounded enabling lump sum payments, by considering the limit of $L_{max}$ to infinity in the problem with restricted dividend payments, see \cite[p.~102]{schm}. But in the present case the underlying situation is different and the concept of equilibrium controls probably has to be adapted for the singular case. In order to keep our discussion as condensed as possible, we leave this task for future research.\\
\begin{remark}
Notice that our approach is also able to cover the situation with a constraint on the ruin probability. For this reason, we consider for the above expectation \eqref{constraint} the limit $\beta \to 0$. We have that $\E_{t,x}\left[e^{-\beta(\tau^L-t)} \right]=\E_{t,x}\left[e^{-\beta(\tau^L-t)} \left(1_{\{\tau^L <\infty \}} +1_{\{\tau^L =\infty \}}\right) \right]=\E_{t,x}\left[e^{-\beta(\tau^L-t)} 1_{\{\tau^L <\infty \}} \right]$ and $\lim\limits_{ \beta \to 0}\E_{t,x}\left[e^{-\beta(\tau^L-t)} 1_{\{\tau^L <\infty \}} \right] = P_{t,x}(\tau^L < \infty).$
\end{remark}

\begin{remark}
If we set $t=0$ and send $\beta$ to zero in \eqref{constraint}, we are able to identify the minimal argument $x$ for which a ruin probability constraint can be fulfilled for an $\alpha<1$ and constant $\mu$ and $\sigma$. This minimal $x$ appears, since the control process can only reduce the current reserve. Hence, if the constraint cannot be reached from the level $x$ without dividends, it is not possible to satisfy \eqref{constraint} with a control action. In order to determine this minimal level, we consider \eqref{SDE controlled} with $dL_t= l dt$, where $l$ is a positive constant. Consequently, we consider the hitting time of zero of a Brownian motion with drift $\mu - l>0$. The corresponding probability of ruin for this process is then given by
\begin{equation}\label{ruinprob}
\psi(x,l)= e^{\frac{-2(\mu-l)}{\sigma^2}x},
\end{equation}
see \cite[p.~109, Lemma 5.21]{schmiRi}.
Certainly, if the drift $\mu- l$ is negative, we cannot fulfil the constraint \eqref{constraint}.
In the extremal case $l=0$, we can satisfy \eqref{constraint} only if
\begin{equation*}
x \geq  - \frac{\sigma^2}{2} \frac{\ln(\alpha)}{\mu} > 0.
\end{equation*}
\end{remark}

\subsection{Reformulation of the reward}
Optimization problems with constraints can usually be handled using the Lagrange multiplier approach. For $\lambda \leq 0$ we define the reward function to be:
\begin{subequations}
\begin{equation}\label{objective}
J(t,x,l)=  \E_{t,x}\left[\int\limits_t^{\tau^{l} } e^{-\delta(s-t)} l(s,X^{l}_s) ds + \lambda \left(e^{-\beta(\tau^{l} -t)}  - \alpha\right) \right],
\end{equation}
and for $\beta \to 0$
\begin{equation}\label{objective0}
J_0(t,x,l)=  \E_{t,x}\left[\int\limits_t^{\tau^{l} } e^{-\delta(s-t)} l(s,X^{l}_s) ds + \lambda \left( 1_{\{\tau^{l} <\infty \}} - \alpha\right) \right],
\end{equation}
\end{subequations}
subject to
\begin{equation}\label{stateprocess}
dX^{l}_t = (\mu(t,X^{l}_t) - l(t,X^{l}_t)) dt + \sigma(t,X^{l}_t) dW_t.
\end{equation}
Moreover, let $\mu(t,x) - l(t,x)$ and $\sigma(t,x)$ be such that a solution to \eqref{stateprocess} exists. In our case we assume that $l(\cdot,\cdot) \in \Theta$ and that $\mu(\cdot,\cdot)$ and $\sigma(\cdot,\cdot)$ fulfil the previously stated conditions $(Z)$. Further, we obtain that the solution to \eqref{stateprocess} is a strong Markov process with infinitesimal generator
$$ \mathcal{A}^l g(t,x)= \frac{\partial}{\partial t} g(t,x)+ (\mu(t,x)-l(t,x)) \frac{\partial}{\partial x} g(t,x) +  \frac{\sigma^2(t,x)}{2} \frac{\partial^2}{\partial x^2}g(t,x),$$
for a suitable function $g$ and $(t,x) \in \R^+ \times \R$, see \cite[p.~322]{KaSh}.

\begin{remark}\label{remark0}
According to \cite[p.~137, eq.~(3)]{Hernandez2015} and \newline \cite{Hernandez2018} we set $\lambda \leq 0$ in the function \eqref{objective}, such that, if the constraint is fulfilled, the term on the right hand side is positive and corresponds to a penalization. We restrict ourselves to a fixed value $\lambda \leq 0$. As proposed in \cite[p.~259-260]{Okse}, we can first solve the problem for every fixed $\lambda$, and then choose $\lambda$ such that the constraint \eqref{constraint} is fulfilled. This is treated in detail in Section \ref{sec:constraint}. Nevertheless, the duality approach, especially as used in \cite[p.~139, Th.~4.2]{Hernandez2015}, is not completely applicable in our setting. This is because the game-theoretic approach, which we are going to use, is different to a classical maximization of a reward function.
\end{remark} 
Since $\delta \neq \beta$, the objective \eqref{objective} (or \eqref{objective0}) is time-inconsistent and the dynamic programming principle is not valid anymore.
Therefore, we make use of the theory of time-inconsistent stochastic optimal control, see \cite{EkelandPirvu2008}, \cite{EkelandMbodjiPirvu2012} and \cite{Bjoerk2017}.
For this purpose, we modify the reward function from \eqref{objective} and introduce a new variable $r \in [0,\infty)$, some artificial reference point in time. For $\beta>0$, we define the function $H: \R_0^+ \times \R_0^+ \times [0,L_{max}] \to \R$, to be
$$ H\left(r,s,l\right)= e^{-\delta(s-r)}l-\lambda \beta e^{-\beta(s-r)},$$
such that we arrive at a representation only using a running reward and a constant term. We denote this auxiliary reward function by:
\begin{subequations}
\begin{align}\label{auxiliaryrewardfunction}
\begin{split}
J(r,t,x,l)&=  \E_{t,x}\left[\int\limits_t^{\tau^{l}} e^{-\delta(s-r)}l(s,X_s^{l})-\lambda \beta e^{-\beta(s-r)} ds +  \lambda \left(1- \alpha\right) \right]\\
&=\E_{t,x}\left[\int\limits_t^{\tau^{l}} H\left(r,s,l(s,X_s^{l})\right)ds  +  \lambda \left(1- \alpha\right) \right].
\end{split}
\end{align}
For $\beta \to 0$ we have:
\begin{align}\label{auxiliaryrewardfunction0}
\begin{split}
J_0(r,t,x,l)&=  \E_{t,x}\left[\int\limits_t^{\tau^{l} } e^{-\delta(s-r)} l(s,X_s) ds + \lambda \left(1_{\{\tau^{l} <\infty \}}  - \alpha\right) \right].
\end{split}
\end{align}

\end{subequations}
For $r=t$ we obtain that the auxiliary reward function coincides with the original reward function $J(t,t,x,l)=J(t,x,l)$ and $J_0(t,t,x,l)=J_0(t,x,l)$, respectively.

\begin{remark}\label{remark1}
Note that we have the following integrability property of the reward. For all fixed $r\in \R_0^+$ and for all $(t,x) \in  \R_0^+ \times \R$ and each admissible control $l$, it holds that
\begin{equation}\label{intcondH}
E_{t,x}\left[\int\limits_t^{\tau^{l}} \left|H\left(r,s,l(s,X_s^{l})\right)\right|ds  \right]<\infty.
\end{equation}
Using the special form of $H$ and the fact that controls are bounded by $L_{max}>0$, we obtain that the auxiliary reward function and the original reward function are both bounded in $t$ and $x$. Namely, it holds that
\begin{equation*}
 \E_{t,x}\left[\int\limits_t^{\tau^{l}}\left| H\left(r,s,l(s,X_s^{l})\right)\right|ds \right] \leq\\
 \frac{L_{max}}{\delta} e^{-\delta (t-r)}+ |\lambda|e^{-\beta(t-r)}\leq \frac{L_{max}}{\delta} e^{\delta r}+ |\lambda|e^{\beta r}.
\end{equation*}
Moreover, in the special case $r=t$ we obtain that
\begin{equation*}
 \E_{t,x}\left[\int\limits_t^{\tau^{l}}\left| H\left(t,s,l(s,X_s^{l})\right)\right|ds \right] \leq \frac{L_{max}}{\delta}+ |\lambda|.
\end{equation*}
\end{remark}

\section{Equilibrium control and associated equilibrium value function}
The time-inconsistencies in \eqref{auxiliaryrewardfunction} and \eqref{auxiliaryrewardfunction0} prevent us from using the dynamic programming approach for deriving a maximizing dividend strategy. Instead we rely on the methodology introduced by \cite{Bjoerk2017}. 
More precisely, we are guided by \cite[p.~345]{Bjoerk2017}, \cite[p.~23]{Yong2012} and \cite[p.~3]{Zhao2014}. Analogously, as discussed in \cite{Bjoerk2017} the subsequent definition is motivated by an intrapersonal game of the management in force with its future composition. In our problem the executive management has to choose the dividend strategy, but their associated reward function is of the form \eqref{auxiliaryrewardfunction} or \eqref{auxiliaryrewardfunction0}, so their preferences are time-inconsistent and hence vary over time. The management of the company is naturally not appointed for life, hence it will happen that single executives or even the whole leadership changes over time. Of course, they should always act in favour of the shareholders of the company. Aiming at a time-consistent management decision results in various economic benefits. Such a line of action will improve both the stability inside the company, regarding affiliates and employees, and the general external assessment, affecting future investors and clients. In the light of these economic considerations, the present constraint \eqref{constraint}, discussed in Section \ref{sec:constraint}, also can be viewed as a comprehensive alternative to classical one-year risk considerations, e.g. conditions on the value at risk.

\begin{definition}[{\cite[Def.~3.4, p.~336]{Bjoerk2017}}]\label{eqlaw}
Consider an admissible control $\hat{l} \in \Theta$ and choose an other arbitrary admissible control $l \in \Theta$, a fixed value $h >0$ and an arbitrary but fixed initial point $(t,x)$. Consider the control $l_h$ defined by
\begin{equation}\label{perturbedcontrol}
l_h(s,y) = \begin{cases}
l(s,y), \text{ for } \ t\leq s < t+h, \ y \in \R^+,\\
\hat{l}(s,y), \text{ for } \ s\geq t+h, \ y \in \R^+.
\end{cases}
\end{equation}
Now, if for all $l \in \Theta$ $$\liminf\limits_{h\to 0}\frac{J(t,x,\hat{l})- J(t,x,l_h)}{h} \geq 0,$$
$\hat{l}$ is called equilibrium control and the associated (equilibrium) value function is defined via 
\begin{equation}\label{equilibriumvaluefunction}
V(t,x)\coloneqq J(t,x,\hat{l}).
\end{equation}
\end{definition}

\begin{definition}\label{HJB}
The extended Hamilton-Jacobi-Bellman system of equations for a function $g(r,t,x)=g^r(t,x)$ and the given objective \eqref{objective} (and for $\beta \to 0$ \eqref{objective0}) reads as follows for all $t, x \in [0,\infty)$ 
\begin{subequations}
\begin{align}
\sup\limits_{l \in [0,L_{max}]} \{\mathcal{A}^{l}g^t(t,x) + H\left(t,t,l\right)\}=0,\label{HJB1}
\end{align}
\begin{align}
\sup\limits_{l \in [0,L_{max}]} \{\mathcal{A}^{l}g^t(t,x) + l\}=0,\label{HJB1_0}
\end{align}
\end{subequations}
 and for all $r,t,x \in [0,\infty)$
 \begin{subequations}
\begin{align}
\mathcal{A}^{\hat{l}}g^r(t,x) + H\left(r,t,\hat{l}\right)=0\label{HJB2}
\end{align}
\begin{align}
\mathcal{A}^{\hat{l}}g^r(t,x) + e^{-\delta(t-r)}\hat{l}=0\label{HJB2_0}
\end{align}
\end{subequations}
\begin{subequations}
\begin{align} 
\lim\limits_{w\to \infty} \E_{t,x}\left[ g^r(w\wedge \tau^{\hat{l}}, X^{\hat{l}}_{w\wedge \tau^{\hat{l}}})\right]=\lambda \left(1- \alpha\right).\label{HJB3}
\end{align}
 \begin{align} 
\lim\limits_{w\to \infty} \E_{t,x}\left[ g^r(w\wedge \tau^{\hat{l}}, X^{\hat{l}}_{w\wedge \tau^{\hat{l}}})\right]=\lambda \left( \E_{t,x}\left[1_{\{\tau^{\hat{l}} <\infty \}}\right]- \alpha\right).\label{HJB3_0}
\end{align}
\end{subequations}
Moreover, $\hat{l}$ denotes the control for which the supremum is attained. Where \eqref{HJB1}, \eqref{HJB2} and \eqref{HJB3} correspond to the case $\beta>0$ and \eqref{HJB1_0}, \eqref{HJB2_0} and \eqref{HJB3_0} belong to the case $\beta \to 0$.
\end{definition}

\begin{remark}\label{growthcond}
Notice that the condition \eqref{HJB3} (or \eqref{HJB3_0}) is the natural one in order to obtain the representation \eqref{g} (or \eqref{g_0}). In this instance we are conscious that the representation \eqref{HJB3_0} is not exactly in line with \cite{Bjoerk2017}, since following their representation, we would have to consider the function $$h_0(t,x)=\lambda \left( \E_{t,x}\left[1_{\{\tau^{\hat{l}} <\infty \}}\right]- \alpha\right)$$ with the associated equation $\mathcal{A}^{\hat{l}}h_0(t,x)=0$ and the corresponding boundary conditions ($\lambda(1-\alpha)$ at $x=0$ and $-\lambda \alpha$ for $x$ to infinity). But since we want to use a verification argument to confirm our solution as the value function, we prefer boundary conditions in terms of deterministic functions. Hence, we impose the following conditions which imply \eqref{HJB3} (or \eqref{HJB3_0}):
\begin{align}\label{HJB4}
\begin{split}
g^r(t,0)&=\lambda(1-\alpha)\quad (r,t) \in [0,\infty) \times [0,\infty),\\
\lim\limits_{x\to \infty} g^r(t,x)&=e^{-\delta( t-r)} \frac{L_{max}}{\delta} - \lambda e^{-\beta(t-r)} + \lambda(1-\alpha) \quad (r,t) \in [0,\infty) \times [0,\infty),
\end{split}
\end{align}
in addition to the requirement that $g^r$ is bounded.
The above requirements \eqref{HJB4} for $\beta \to 0$ also yield \eqref{HJB3_0}, since
\begin{align*}
&\lim\limits_{w\to \infty} \E_{t,x}\left[ g^r(w\wedge \tau^{\hat{l}}, X^{\hat{l}}_{w\wedge \tau^{\hat{l}}})\right]=\\
 &\E_{t,x}\left[ 1_{\{\tau^{\hat{l}} =\infty \}}(-\lambda \alpha) \right] + \E_{t,x}\left[ 1_{\{\tau^{\hat{l}} <\infty \}}\lambda(1- \alpha) \right] =\lambda \left( \E_{t,x}\left[1_{\{\tau^{\hat{l}} <\infty \}}\right]- \alpha\right).
\end{align*}
\end{remark}

\section{Verification Theorem}

In this section we state and prove the associated \emph{verification theorem}.
The required regularity of the involved function is due to the applicability of a suitable version of It\^{o}'s formula and the properties of a constructible explicit solution.

\begin{theorem}[Verification Theorem]\label{verification}
Given a real-valued function $g \in \mathcal{C}^{0,1,1}\left([0,\infty)^3 \right)$ $\cap$ $\mathcal{C}^{0,1,2}\left([0,\infty) \times [0,\infty) \times ([0,\infty)\backslash \{b\}) \right)$ for a value $b\geq 0$, which is bounded for all $t \in[0,\infty)$ and for all $x \in [0,\infty)$. In addition, suppose that $\frac{\partial}{\partial x}g(r,t,x)$, $\frac{\partial^2}{\partial x^2}g(r,t,x)$ and $\frac{\partial}{\partial t}g(r,t,x)$ are bounded for all $t \in[0,\infty)$ and for all $x \in [0,\infty)$. Furthermore, let $g$ solve the extended HJB system in Definition \ref{HJB}, where $\hat{l}$ is an admissible control for which the supremum is attained.\\
Then,
\begin{itemize}
\item[-]$\hat{l}$ is an equilibrium control according to Definition \ref{eqlaw},
\item[-] $V(t,x)=g(t,t,x)$ is the associated value function and
\item[-] $g^r(t,x)=g(r,t,x)$ has the representation, for all $r\in [0,\infty)$:
\begin{subequations}
\begin{equation}\label{g}
g^r(t,x)=\E_{t,x}\left[ \int\limits_{t}^{\tau^{\hat{l}}}   e^{-\delta(s-r)}\hat{l}(s,X_s^{\hat{l}})- \lambda \beta e^{-\beta(s-r)}ds+\lambda \left(1- \alpha\right)\right] \text{ for } \beta>0,
\end{equation}
and
\begin{equation}\label{g_0}
g^r(t,x)=\E_{t,x}\left[ \int\limits_{t}^{\tau^{\hat{l}}}   e^{-\delta(s-r)}\hat{l}(s,X_s^{\hat{l}})ds+\lambda \left(1_{\{\tau^{\hat{l}} <\infty \}}- \alpha\right)\right]  \text{ for } \beta \to 0.
\end{equation}
\end{subequations}
\end{itemize}
\end{theorem}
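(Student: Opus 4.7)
The plan is to adapt Björk's verification scheme to our setting in three self-contained steps.

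\textbf{Step 1 (Stochastic representation of $g^r$).} Fix $r\ge 0$ and $(t,x)\in[0,\infty)^2$. I apply It\^{o}'s formula to $s\mapsto g^r(s,X^{\hat{l}}_s)$ on $[t,w\wedge\tau^{\hat{l}}]$ for arbitrary $w>t$. The regularity $g\in\mathcal{C}^{0,1,1}\cap\mathcal{C}^{0,1,2}(\cdot\setminus\{b\})$ is sufficient: since $\partial_x g$ is continuous across the singular point $b$, no local-time contribution appears and the classical It\^{o} formula applies. Substituting the pointwise identity \eqref{HJB2} replaces $\mathcal{A}^{\hat{l}}g^r$ by $-H(r,s,\hat{l})$, while the boundedness assumptions on $\partial_x g^r$ together with the bounds on $\sigma$ in condition $(Z)$ make the stochastic integral a true martingale of zero expectation. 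Taking expectations, rearranging and letting $w\to\infty$, the boundary term converges to $\lambda(1-\alpha)$ by \eqref{HJB3}, while the running-cost integral is dominated via the estimate in Remark \ref{remark1}. This yields \eqref{g}; the case $\beta\to 0$ is identical using \eqref{HJB2_0} and \eqref{HJB3_0}.

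\textbf{Step 2 (Decomposition under the perturbed control).} For the perturbation $l_h$ of Definition \ref{eqlaw}, I rewrite $J(t,x,l_h)=J(t,t,x,l_h)$ in its auxiliary form and split the integral at $(t+h)\wedge\tau^{l_h}$. On $\{\tau^{l_h}>t+h\}$ the continuation from $t+h$ is driven by $\hat{l}$; by the strong Markov property and Step 1 applied with reference $r=t$, the conditional continuation value equals $g(t,t+h,X^{l_h}_{t+h})$. On $\{\tau^{l_h}\le t+h\}$ the boundary condition \eqref{HJB4} provides $g(t,\tau^{l_h},0)=\lambda(1-\alpha)$, so both branches fuse into
\begin{equation*}
J(t,x,l_h)=\E_{t,x}\!\left[\int_t^{(t+h)\wedge\tau^{l_h}}\!H(t,s,l(s,X^{l_h}_s))\,ds+g\bigl(t,(t+h)\wedge\tau^{l_h},X^{l_h}_{(t+h)\wedge\tau^{l_h}}\bigr)\right].
\end{equation*}
A second application of It\^{o} to $g(t,\cdot,\cdot)$ under the dynamics driven by $l$ converts the terminal term into $g(t,t,x)+\E_{t,x}\int_t^{(t+h)\wedge\tau^{l_h}}\mathcal{A}^l g(t,s,X^{l_h}_s)\,ds$.

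\textbf{Step 3 (Sign and equilibrium inequality).} Step 1 with $r=t$ gives $J(t,x,\hat{l})=g(t,t,x)$. Subtracting the identity from Step 2 yields
\begin{equation*}
J(t,x,\hat{l})-J(t,x,l_h)=-\E_{t,x}\!\left[\int_t^{(t+h)\wedge\tau^{l_h}}\!\bigl(\mathcal{A}^l g(t,s,X^{l_h}_s)+H(t,s,l(s,X^{l_h}_s))\bigr)ds\right].
\end{equation*}
Dividing by $h$ and letting $h\downarrow 0$, continuity of the integrand in $s$ together with the sample-path continuity of $X^{l_h}$ reduces the limit to $-[\mathcal{A}^l g(t,t,x)+H(t,t,l(t,x))]$, which is non-negative by the sup-equation \eqref{HJB1}. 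Hence the liminf in Definition \ref{eqlaw} is non-negative, $\hat{l}$ is an equilibrium control, and consequently $V(t,x)=g(t,t,x)$. The same scheme goes through for the $\beta\to0$ case with \eqref{HJB1_0} in place of \eqref{HJB1}.

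\textbf{Expected obstacles.} The delicate points are threefold. First, justifying the standard It\^{o} formula at the kink $b$ — resolved by the $\mathcal{C}^{0,1,1}$ hypothesis, which prevents a jump in $\partial_x g$ and hence any local-time contribution. Second, the limit $w\to\infty$ in Step 1 requires dominated convergence, supported by the boundedness of $g$ and the integrability bound in Remark \ref{remark1}. Third, gluing the two regimes $\{\tau^{l_h}\le t+h\}$ and $\{\tau^{l_h}>t+h\}$ requires the explicit deterministic boundary value in \eqref{HJB4}; without it the cancellation of the two branches into a single expectation, essential for applying It\^{o} a second time, would fail.
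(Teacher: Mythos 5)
Your proposal follows essentially the same route as the paper's own proof: Itô's formula plus \eqref{HJB2} and \eqref{HJB3} for the representation \eqref{g}, the strong-Markov/representation argument to rewrite $J(t,x,l_h)$ with terminal term $g(t,v,X^{l_h}_v)$, a second application of Itô, and division by $h$ with \eqref{HJB1} to close. The only point to tighten is Step 3: since admissible controls are merely measurable, the integrand need not be continuous in $s$, so the limit $\frac{1}{h}\int_t^{v}(\cdot)\,ds \to \mathcal{A}^{l}g^t(t,x)+H(t,t,l(t,x))$ should be justified via bounded convergence and Lebesgue's differentiation theorem (as the paper does) rather than by continuity.
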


\begin{proof}
At first, we are going to show that $g(t,t,x)=J(t,x,\hat{l})$. We are certainly guided by the work of \cite[p.~345]{Bjoerk2017}, \cite[p.~23]{Yong2012} and \cite[p.~3]{Zhao2014}.\\
We start with showing the stochastic representation of $g^r$, for that reason we use a modified version of It\^{o}'s formula, cf.~\cite[p.~7, Thm.~2.9]{LeobThonSz2015}, and take expectations. Let $r,t,x \in [0,\infty)$ and $w> t$ be fixed, further we set $v= w\wedge \tau^{\hat{l}}$ and use the bounded derivatives of $g$ in addition to \eqref{HJB2} to obtain
\begin{align*}
&\E_{t,x}\left[ g^r(v, X^{\hat{l}}_{v})- g^r(t,x)\right]=\\
&\E_{t,x}\left[ \int\limits_{t}^{v} \mathcal{A}^{\hat{l}} g^r(s,X^{\hat{l}}_s)ds\right] =\E_{t,x}\left[ \int\limits_{t}^{v}  -H\left(r,s,\hat{l}(s,X^{\hat{l}}_s)\right)ds\right].
\end{align*}
Now the usage of boundary conditions from \eqref{HJB3}, the boundedness as discussed in Remark \ref{remark1} and the dominated convergence theorem (or even monotone convergence if $\lambda\leq 0$ because $H\geq0$) leads to the desired form
\begin{align*}
g^r(t,x)=&\E_{t,x}\left[ \int\limits_{t}^{\tau^{\hat{l}}}  H\left(r,s,\hat{l}(s,X^{\hat{l}}_s)\right)ds+\lambda \left(1- \alpha\right)\right]\\
=&\E_{t,x}\left[ \int\limits_{t}^{\tau^{\hat{l}}}   e^{-\delta(s-r)}\hat{l}(s,X_s^{\hat{l}})- \lambda \beta e^{-\beta(s-r)}ds+\lambda \left(1- \alpha\right)\right].
\end{align*}
This gives indeed $$g^t(t,x)=g(t,t,x)=J(t,x,\hat{l}). $$
In the second part we prove that $\hat{l}$ is indeed an equilibrium control. Therefore, we consider, as proposed in Definition \ref{eqlaw}, an arbitrary admissible control $l \in \Theta$, $h>0$ and arbitrary but fixed $t,x \in [0,\infty)$ in order to define $l_h(s,y)$ for $s \in [t, \infty)$ and $y \in [0,\infty)$ as in \eqref{perturbedcontrol}. Set $v=(t+h) \wedge \tau^{l_h}(t,x)$, then we have
\begin{align*}
&J(t,x,\hat{l})- J(t,x,l_h)\\
&=g(t,t,x)-  \E_{t,x}\left[ \int\limits_{t}^{v}  H\left(t,s,l_h(s,X^{l_h}_s)\right)ds+\lambda \left(1- \alpha\right)\right]\\
 &- \E_{t,x}\left[ \E_{v,X^{l_h}_v}\left[\int\limits_{v}^{\tau^{l_h}(t,x)}  H\left(t,s,l_h(s,X^{l_h}_s)\right) ds\right] \right].
\end{align*}
The first subsequent equality holds due to the definition of $l_h$ and the strong Markov property. For the second one we need that \eqref{g} holds true for different values in the first and second argument of $g$ ,
\begin{align*}
&\E_{t,x}\left[ \E_{v,X^{l_h}_v}\left[\int\limits_{v}^{\tau^{l_h}(t,x)}  H\left(t,s,l_h(s,X^{l_h}_s)\right) ds + \lambda(1-\alpha)\right] \right]\\
&=\E_{t,x}\left[ \E_{v,X^{l_h}_v}\left[\int\limits_{v}^{\tau^{\hat{l}}(v,X^{l_h}_v)}  H\left(t,s,\hat{l}(s,X^{\hat{l}}_s)\right) ds+ \lambda(1-\alpha)\right] \right]\\
&=\E_{t,x}\left[ g(t,v,X^{l_h}_v)\right].
\end{align*}
Moreover, taking this into account and using the version of It\^{o}'s formula from \newline
\cite[p.~7, Thm.~2.9]{LeobThonSz2015}, we end up with
\begin{align}\label{proof1}
\begin{split}
&J(t,x,\hat{l})- J(t,x,l_h)\\
&=\E_{t,x}\left[ g(t,t,x)  - g(t,v,X^{l_h}_v) - \int\limits_{t}^{v}  H\left(t,s,l_h(s,X^{l_h}_s)\right)ds \right]\\
&= - \E_{t,x}\left[  \int\limits_{t}^{v} \mathcal{A}^{l_h} g^t(s,X^{l_h}_s) + H\left(t,s,l_h(s,X^{l_h}_s)\right)ds \right].
\end{split}
\end{align}

The expectation of the stochastic integral part of \eqref{proof1} is zero. This holds true since $\frac{\partial}{\partial x}g$  and $\sigma$ are  continuous in $t$ and $x$ and the controlled process is adapted and continuous, we get that the integrand of the stochastic integral is predictable. Furthermore, $\frac{\partial}{\partial x}g$  and $\sigma$ are bounded in $t$ and $x$ and hence the stochastic integral is indeed a martingale for general admissible controls $l$.\\
Further, we divide  \eqref{proof1} by $h$ and consider the limit inferior for  $h$ to zero. Using bounded convergence to interchange limit and expectation and Lebesgue's differentiation theorem \cite[p.~100, 108-109]{Wheeden1977}, we obtain the second equality in \eqref{proof2}
\begin{align}\label{proof2}
\begin{split}
&\liminf\limits_{h\to 0}\frac{J(t,x,\hat{l})- J(t,x,l_h)}{h}\\
 &=\liminf\limits_{h\to 0} \E_{t,x}\left[ -\frac{1}{h}\int\limits_{t}^{v} \mathcal{A}^{l_h} g^t(s,X^{l_h}_s) + H\left(t,s,l_h(s,X^{l_h}_s)\right)ds \right]\\
&= -  \left(\mathcal{A}^{l} g^t(t,x) + H\left(t,t,l(t,x)\right) \right)\geq 0.
\end{split}
\end{align}
Moreover, we use \eqref{HJB1} for the last inequality in \eqref{proof2} to finally obtain that $\hat{l}$ is an equilibrium control. This gives that $V(t,x)=J(t,x,\hat{l})=g(t,t,x)$.
\end{proof}
\begin{remark}
In the theory of stochastic optimal control, the classical solution approach suggests to prove a verification theorem and present a suitable solution satisfying the needed regularity requirements. For the sake of completeness, we want to mention the result by \cite{lindensjoe2019}, where it is shown that, under regularity assumptions, it is even necessary for the equilibrium to solve the extended HJB system. We adopt the notation of the following theorem and just mention that our defined extended HJB equation is a special case of the extended HJB system II in \cite{lindensjoe2019}. We omit further details and refer to the original source.

\begin{theorem}[ {\cite[p.~431, Theorem 3.12]{lindensjoe2019}} ]
A regular equilibrium $(\hat{u},V_{\hat{u}},f_{\hat{u}},g_{\hat{u}})$  solves the extended HJB system II.
\end{theorem}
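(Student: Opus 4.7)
The plan is to reverse-engineer the argument used in the verification Theorem~\ref{verification}: rather than starting from a function solving the HJB system and showing it equals the equilibrium value, I start from a regular equilibrium $(\hat{u},V_{\hat u},f_{\hat u},g_{\hat u})$ and show that each component of the extended HJB system II must be satisfied pointwise. Because the problem is formulated with reference point $r$ (as in our $H(r,s,l)$-formulation above), the extended HJB system II in \cite{lindensjoe2019} splits naturally into (i) a family of \emph{fixed-reference} linear PDEs for the auxiliary functions $f^r,g^r$ under the frozen control $\hat u$, and (ii) a nonlinear supremum equation at the diagonal $r=t$ for $V_{\hat u}$. I would verify these two blocks separately, and then the boundary/limit conditions.

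First I would establish the fixed-reference equations. Fix $r$ and interpret $f^r_{\hat u}(t,x)$ as the expected reward under $\hat u$ viewed from reference time $r$. Regularity of $f^r_{\hat u}$ (part of the definition of \emph{regular} equilibrium) allows application of the same modified It\^o formula used on p.~\pageref{proof1}. Taking expectations and sending the stopping horizon $w\to\infty$, the supermartingale/martingale structure of $f^r_{\hat u}(t\wedge\tau^{\hat u},X^{\hat u}_{t\wedge\tau^{\hat u}})+\int_t^{\cdot}H(r,s,\hat u)\,ds$ forces $\mathcal A^{\hat u}f^r_{\hat u}(t,x)+H(r,t,\hat u(t,x))=0$ at every interior $(t,x)$, which is exactly the $f$-equation of the extended HJB system II. The analogous argument (without the running term) gives the PDE for $g^r_{\hat u}$ that encodes the $\lambda(1-\alpha)$ type constant.

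Next I would derive the supremum equation. Take an arbitrary $l\in\Theta$ and form the needle variation $l_h$ as in \eqref{perturbedcontrol}. Retracing the computation of \eqref{proof1}, but now regarding $V_{\hat u}=f^t_{\hat u}|_{r=t}$ as the known object, yields
\begin{equation*}
J(t,x,\hat l)-J(t,x,l_h)=-\,\E_{t,x}\!\left[\int_t^{v}\mathcal A^{l_h}V_{\hat u}(s,X^{l_h}_s)+H(t,s,l_h(s,X^{l_h}_s))\,ds\right]+R_h(t,x),
\end{equation*}
where the remainder $R_h$ collects the mismatch between the frozen-reference continuation value $f^t_{\hat u}$ at time $v=(t+h)\wedge\tau^{l_h}$ and the genuine equilibrium continuation value; under regularity one shows $R_h=o(h)$ via Taylor expansion of $f^{t}_{\hat u}-f^{t+h}_{\hat u}$ in the reference variable. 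Dividing by $h$, applying bounded convergence and Lebesgue's differentiation theorem exactly as in \eqref{proof2}, the equilibrium inequality $\liminf_{h\downarrow 0}(\cdot)/h\geq 0$ translates into $\mathcal A^{l}V_{\hat u}(t,x)+H(t,t,l)\leq 0$ for every $l\in[0,L_{\max}]$. Since equality is attained at $l=\hat u(t,x)$ by the same identity with $l_h\equiv\hat u$, we obtain $\sup_{l}\{\mathcal A^{l}V_{\hat u}(t,x)+H(t,t,l)\}=0$ and the supremum is realised by $\hat u$.

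Finally, the boundary and terminal conditions of the extended HJB system II follow directly from the probabilistic representations: at $x=0$ we have $\tau^{\hat u}=t$ almost surely, so $f^r_{\hat u}(t,0)=\lambda(1-\alpha)$; and the growth/limit as $w\to\infty$ comes from dominated convergence, using the bound on $H$ noted in Remark~\ref{remark1}. The main obstacle I anticipate is the estimate $R_h=o(h)$ together with a clean uniform-in-$\omega$ control of the stochastic integrals over $[t,t+h]$ for the perturbed dynamics $X^{l_h}$; this is precisely where the \emph{regularity} assumption on the equilibrium (bounded $\partial_tf,\partial_xf,\partial_{xx}f$ and Lipschitz/continuity in $r$) is consumed, and also where the argument in \cite{lindensjoe2019} is considerably more delicate than in the verification direction carried out above.
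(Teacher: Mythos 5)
First, a point of order: the paper does not prove this statement at all. It is quoted verbatim from Lindensj\"o (2019) inside a remark, precisely to record that the converse of Theorem \ref{verification} (necessity of the extended HJB system for a regular equilibrium) is available in the literature; the surrounding text explicitly says that further details are omitted and the reader is referred to the original source. So there is no in-paper proof to compare yours against, and your sketch has to be judged on its own terms.

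On those terms, your strategy --- reverse the verification computation, splitting the system into the frozen-reference linear equations for $f^r_{\hat u},g^r_{\hat u}$ and a diagonal supremum equation obtained from spike variations --- is the right one and is essentially the route taken in the cited source. But two steps are genuinely incomplete as written. (i) The remainder $R_h$. If you keep the reference point frozen at $t$ throughout (as the proof of Theorem \ref{verification} does, working with $g^t(v,X^{l_h}_v)$ rather than with $V_{\hat u}(v,X^{l_h}_v)=f^{v}_{\hat u}(v,X^{l_h}_v)$), the tower property and the probabilistic definition of $f^t_{\hat u}$ give the decomposition of $J(t,x,l_h)$ \emph{exactly}, with no remainder; the supremum equation \eqref{HJB1} is stated with $\mathcal{A}^{l}g^t$, reference frozen, so no Taylor expansion in $r$ is called for. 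If you instead insist on routing the argument through $V_{\hat u}$, then $R_h=o(h)$ requires differentiability of $r\mapsto f^r_{\hat u}(t,x)$ with a controlled derivative --- an additional regularity hypothesis you neither state nor verify. Either drop the detour or make the hypothesis explicit. (ii) The Lebesgue differentiation step. For an arbitrary merely measurable feedback perturbation $l\in\Theta$, the map $s\mapsto \mathcal{A}^{l}g^t(s,X^{l_h}_s)+H\left(t,s,l(s,X^{l_h}_s)\right)$ need not be continuous at $s=t$, so $\frac{1}{h}\int_t^{t+h}(\cdot)\,ds$ converges to the pointwise evaluation only at Lebesgue points, not at every $(t,x)$. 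In the verification direction this is harmless because only the sign of the integrand is needed; in the necessity direction you need the limit to exist and to equal $\mathcal{A}^{l}g^t(t,x)+H(t,t,l(t,x))$ at the given $(t,x)$. The standard fix is to perturb with constant actions $l\in[0,L_{max}]$ (or with controls continuous at $(t,x)$), which suffices because the supremum in \eqref{HJB1} runs over the action set and not over feedback maps. With those two repairs your outline is a faithful account of the omitted proof.
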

\end{remark}

\section{Classical solution approach}
In this section we are able to reveal an explicit solution to the above stated time-inconsistent problem under the assumption of constant coefficients, i.e., $\mu(t,x)\equiv \mu$ and $\sigma(t,x) \equiv \sigma$.
\begin{remark}\label{betatozero}
Please notice that in the subsequent analysis we can send $\beta \to 0$ and do not violate its general validity if $\mu > L_{max}$. Since in this instance the function $V_2(x;b)$, which will be constructed subsequently, solves the associated ODE \eqref{PDE2} also in the limiting case.\\
On the contrary, if $\mu \leq L_{max}$ this function $V_2(x;b)$ is zero in the limiting case and cannot fulfil the condition $\lim\limits_{x\to \infty}  V_2(x;b)= -\lambda$ in \eqref{PDE2}. Therefore, the present problem for $\mu \leq L_{max}$ reduces, as $\beta \to 0$, to the unconstrained classical dividend problem with the value function shifted by the constant $\lambda(1-\alpha)$ and optimal threshold height $b^*=\bar{b}$.
\end{remark}
We follow the classical line of attack as in \cite{Zhao2014}. Since we have to solve the equation \eqref{HJB1},
\begin{align*}
&\sup\limits_{l \in [0,L_{max}]} \left\{\frac{\partial}{\partial t} g^t(t,x) + (\mu-l)\frac{\partial}{\partial x} g^t(t,x)  + \frac{\sigma^2}{2 } \frac{\partial^2}{\partial x^2} g^t(t,x) + l -\lambda \beta \right\}=\\
&\frac{\partial}{\partial t} g^t(t,x) + \mu \frac{\partial}{\partial x} g^t(t,x)  + \frac{\sigma^2}{2 } \frac{\partial^2}{\partial x^2} g^t(t,x) -\lambda \beta + \sup\limits_{l \in [0,L_{max}]} \left\{l\left(1-\frac{\partial}{\partial x} g^t(t,x)\right) \right\}=0,
\end{align*}
we obtain as in the classical problem for the maximizing argument
\begin{align*}
l(t,x)=\left\{\begin{array}{lr}
        0, & \text{if } \frac{\partial}{\partial x} g^t(t,x)>1,\\
        L_{max}, & \text{if } \frac{\partial}{\partial x} g^t(t,x)\leq1.
        \end{array}\right.
\end{align*}
Moreover, if we assume that our function is increasing and concave in $x$, there exists a threshold level $b$ such that $\frac{\partial}{\partial x} g(t,t,x)> 1$ for $0\leq x< b$ and $\frac{\partial}{\partial x} g(t,t,x)\leq  1$ for $x\geq b$. Under this premise and taking account of Remark \ref{growthcond}, fulfilling the following equation is sufficient for the solution of the extended HJB equation in Definition \ref{HJB}:
\begin{align}\label{HJBbarrier}
\left\{\begin{array}{ll}
0=\frac{\partial}{\partial t} g(r,t,x) +  \mu \frac{\partial}{\partial x} g(r,t,x)  + \frac{\sigma^2}{2 } \frac{\partial^2}{\partial x^2} g(r,t,x) -\lambda \beta e^{-\beta(t-r)},\\
\quad \text{ for }(r,t,x) \in [0,\infty) \times [0,\infty) \times [0,b),\\     
0=\frac{\partial}{\partial t} g(r,t,x) +  (\mu-L_{max})\frac{\partial}{\partial x} g(r,t,x)  + \frac{\sigma^2}{2 } \frac{\partial^2}{\partial x^2} g(r,t,x)+ e^{-\delta(t-r)}   L_{max}\\
-\lambda \beta e^{-\beta(t-r)},\\
\quad \text{ for }(r,t,x) \in [0,\infty) \times [0,\infty)\times [b,\infty),\\
g(r,t,0)=\lambda(1-\alpha),\\
\quad \text{ for } (r,t) \in [0,\infty) \times [0,\infty),\\
\lim\limits_{x\to \infty} g(r,t,x)=e^{-\delta( t-r)} \frac{L_{max}}{\delta} - \lambda e^{-\beta(t-r)} + \lambda(1-\alpha),\\
\quad \text{ for }(r,t) \in [0,\infty) \times [0,\infty).
\end{array}\right.
\end{align}
In order to take advantage of Theorem \ref{verification} we make the following well guessed ansatz:
\begin{equation}
\nu(r,t,x;b)=e^{-\delta(t-r)} V_1(x;b)+e^{-\beta(t-r)} V_2(x;b)+\lambda (1-\alpha),
\end{equation}
where $V_1(x;b)$ is the solution of 
\begin{align}\label{PDE1}
\left\{\begin{array}{lr}
-\delta v_1(x) + \mu \frac{\partial}{\partial x} v_1(x)  + \frac{\sigma^2}{2 } \frac{\partial^2}{\partial x^2} v_1(x) =0, &0\leq x< b\\     
 -\delta v_1(x) +(\mu-L_{max}) \frac{\partial}{\partial x} v_1(x)  + \frac{\sigma^2}{2 } \frac{\partial^2}{\partial x^2} v_1(x)+ L_{max} =0, &x\geq  b,\\
v_1(0)=0,\\
\lim\limits_{x\to \infty}  v_1(x)= \frac{L_{max}}{\delta},
        \end{array}\right.
\end{align}
and $V_2(x;b)$ solves
\begin{align}\label{PDE2}
\left\{\begin{array}{lr}
-\beta v_2(x) + \mu \frac{\partial}{\partial x} v_2(x)  + \frac{\sigma^2}{2 } \frac{\partial^2}{\partial x^2} v_2(x) -\lambda \beta =0, &0\leq x< b\\     
 -\beta v_2(x) +(\mu-L_{max}) \frac{\partial}{\partial x} v_2(x)  + \frac{\sigma^2}{2} \frac{\partial^2}{\partial x^2} v_2(x)-\lambda \beta =0, &x\geq b,\\
v_2(0)=0,\\
\lim\limits_{x\to \infty}  v_2(x)= -\lambda.
        \end{array}\right.
\end{align}
The above equations lead to the following solutions
\begin{align*}
V_1(x;b)=\left\{\begin{array}{lr}
        A_1(b) e^{a_1 x} + A_2(b) e^{a_2 x} , & \text{if } 0\leq x< b\\
        \frac{L_{max}}{\delta}+ B_1(b) e^{b_1 x} + B_2(b) e^{b_2 x}, & \text{if } x\geq b,
        \end{array}\right.
\end{align*}
where
\begin{align*}
a_{1,2}=-\frac{\mu}{\sigma^2} \pm \sqrt{\frac{\mu^2}{\sigma^4}+ \frac{2 \delta}{\sigma^2}}, \quad \quad 
b_{1,2}=-\frac{(\mu-L_{max})}{\sigma^2} \pm \sqrt{\frac{(\mu-L_{max})^2}{\sigma^4}+ \frac{2 \delta}{\sigma^2}}.
\end{align*}
Due to the conditions $v_1(0)=0$ and $ \lim\limits_{x\to \infty}  v_1(x)= L_{max} / \delta$, we obtain $A_2(b)=-A_1(b)$ and $B_1(b)=0$. On the other hand we have for $V_2$:
\begin{align*}
V_2(x;b)=\left\{\begin{array}{lr}
        -\lambda + C_1(b) e^{c_1 x} + C_2(b) e^{c_2 x} , & \text{if } 0\leq x< b\\
        -\lambda + D_1(b) e^{d_1 x} + D_2(b) e^{d_2 x}, & \text{if } x\geq b,
        \end{array}\right.
\end{align*}
where
\begin{align*}
c_{1,2}=-\frac{\mu}{\sigma^2} \pm \sqrt{\frac{\mu^2}{\sigma^4}+ \frac{2 \beta}{\sigma^2}}, \quad \quad
d_{1,2}=-\frac{(\mu-L_{max})}{\sigma^2} \pm \sqrt{\frac{(\mu-L_{max})^2}{\sigma^4}+ \frac{2 \beta}{\sigma^2}}.
\end{align*}
From the conditions $v_2(0)=0$ and $ \lim\limits_{x\to \infty}  v_2(x)= -\lambda $, we obtain $C_2(b)=\lambda-C_1(b)$ and $D_1(b)=0$.
Furthermore, we ask for the following smooth fit conditions for $i=1,2$, 
\begin{align}
&V_i(b-;b)=V_i(b+;b),\label{smoothfitcond1}\\ 
&V_i'(b-;b)=V_i'(b+;b) \text{ and } \label{smoothfitcond2}\\
&\frac{\partial}{\partial x} \nu(t,t,b-;b)=1.\label{smoothfitcond3}
\end{align}
Note that the condition \eqref{smoothfitcond3} is in contrast to the smooth fit condition of the classical dividend problem. In our case, we have to make sure that $V'_1(b-;b)+V'_2(b-;b)=1$.\\

For $V_1$ the first two conditions \eqref{smoothfitcond1} and \eqref{smoothfitcond2} immediately yield
\begin{align*}
A_1(b)&=- \frac{L_{max}}{\delta} \frac{b_2}{(a_1-b_2)e^{a_1 b}+ (b_2-a_2) e^{a_2 b}},\\
B_2(b)&= \frac{A_1(b)}{b_2 e^{b_2 b}} (a_1e^{a_1 b} - a_2 e^{a_2 b}).
\end{align*}
Since $b_2<0$, $a_1-b_2>0$ and $b_2-a_2>0$, we get that $A_1(b)>0$ and $B_2(b)<0$.
Analogously, \eqref{smoothfitcond1} and \eqref{smoothfitcond2} give for $V_2$ that 
\begin{align*}
C_1(b)&= \frac{\lambda(c_2-d_2) e^{c_2 b}}{(d_2-c_1)e^{c_1 b}+ (c_2-d_2) e^{c_2 b}},\\
D_2(b)&= \frac{C_1(b) e^{c_1 b} +(\lambda- C_1(b)) e^{c_2 b}}{e^{d_2 b}}.
\end{align*}
Because $c_2-d_2<0$ and $\lambda<0$, we obtain $\lambda-C_1(b)<0$,  $C_1(b)<0$ and $D_2(b)<0$.
The last condition \eqref{smoothfitcond3} reveals the exact candidate equilibrium threshold level $b$. Indeed, we are looking for a value $b$ such that
\begin{align*}
&A_1(b) (a_1 e^{a_1 b} - a_2 e^{a_2 b}) + C_1(b) c_1 e^{c_1 b} +(\lambda- C_1(b))c_2  e^{c_2 b}=\\
&\frac{L_{max}}{\delta} \frac{(-b_2)(a_1 e^{a_1 b} - a_2 e^{a_2 b})}{(a_1-b_2)e^{a_1 b}+ (b_2-a_2)e^{a_2 b}}  + \lambda \frac{(c_2-c_1) d_2 e^{(c_1+c_2)b}}{(d_2-c_1)e^{c_1b}+(c_2-d_2)e^{c_2 b}}=1.
\end{align*}
Therefore, we analyze the following function
\begin{align}\label{eq:G}
G(b)=\frac{L_{max}}{\delta} \frac{(-b_2)(a_1 e^{a_1 b} - a_2 e^{a_2 b})}{(a_1-b_2)e^{a_1 b}+ (b_2-a_2)e^{a_2 b}}  + \lambda \frac{(c_2-c_1) d_2 e^{(c_1+c_2)b}}{(d_2-c_1)e^{c_1b}+(c_2-d_2)e^{c_2 b}}-1.
\end{align}
We have to find a $b>0$ such that $G(b)=0$. If this root exists, we denote it by $b^*$. 
\begin{remark}\label{zeroequation}
Concerning the dependence of the equilibrium threshold level $b^*$ on $\beta$ we obtain that: if $\beta$ tends to zero, then $c_1=0$, $c_2=-(2\mu)/{\sigma^2}$, and if $\mu-L_{max}>0$, we obtain $d_1=0$ and $d_2=-2(\mu-L_{max})/{\sigma^2}<0$.\\
If $\mu-L_{max}\leq0$ we obtain $d_1=-2(\mu-L_{max})/{\sigma^2}\geq0$ and $d_2=0$. Which means that the function $G(b)$ reduces to
\begin{align}\label{eq:G0}
G_0(b)=\left\{\begin{array}{lr}
         \frac{L_{max}}{\delta} \frac{(-b_2)(a_1 e^{a_1 b} - a_2 e^{a_2 b})}{(a_1-b_2)e^{a_1 b}+ (b_2-a_2)e^{a_2 b}}  + \lambda \frac{2\frac{\mu}{\sigma^2}(\mu-L_{max})}{-(\mu-L_{max})e^{\frac{2\mu}{\sigma^2}b}-L_{max}}-1, & \text{if } \mu>L_{max}\\
 \frac{L_{max}}{\delta} \frac{(-b_2)(a_1 e^{a_1 b} - a_2 e^{a_2 b})}{(a_1-b_2)e^{a_1 b}+ (b_2-a_2)e^{a_2 b}} -1, & \text{if }\mu \leq L_{max}.
        \end{array}\right.
 \end{align}
Therefore, the threshold level tends to the solution of the equation $G_0(b)=0$, as $\beta$ tends to zero. Moreover, if $\mu\leq L_{max}$, we arrive at the optimal threshold of the classical problem 
\begin{equation}\label{classicalthreshold}
\bar{b}=\frac{1}{a_1-a_2}\ln\left(\frac{a_2(b_2-a_2)}{a_1(b_2-a_1)}\right),
\end{equation}
since ruin is for sure. This corresponds to the same control strategy as if $\lambda$ equals zero. Furthermore, the statement of Lemma \ref{lem1} also holds true for the special case $\beta \to 0$, which just means that $G$ is replaced by $G_0$.
\end{remark}

\begin{lemma}\label{lem1}
If $$(-b_2)\frac{L_{max}}{\delta}+\lambda d_2 -1 > 0$$ then G(b)=0 admits a unique positive solution denoted by $b^*$.
\end{lemma}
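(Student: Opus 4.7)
The plan is to apply the intermediate value theorem to the continuous function $G\colon[0,\infty)\to\mathbb{R}$ defined in~\eqref{eq:G} and then obtain uniqueness via strict monotonicity.

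First, I would verify continuity by checking that both denominators in~\eqref{eq:G} are nonvanishing on $[0,\infty)$: the first is a sum of two strictly positive exponentials by the sign analysis recorded just after~\eqref{smoothfitcond2} (namely $a_1-b_2>0$ and $b_2-a_2>0$), and the second is a sum of two strictly negative terms because $d_2-c_1<0$ (since $d_2<0<c_1$) and $c_2-d_2<0$ (the latter obtained by rationalising the difference of the square roots defining $c_2$ and $d_2$, using $L_{max}>0$). Next, I would evaluate $G$ at the two endpoints. At $b=0$ all exponentials equal $1$ and the two ratios telescope cleanly to $(-b_2)L_{max}/\delta$ and $\lambda d_2$ respectively, so that
\[G(0)=(-b_2)\frac{L_{max}}{\delta}+\lambda d_2-1,\]
which is strictly positive by hypothesis. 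For $b\to\infty$, dividing through by the dominant exponentials ($e^{a_1 b}$ in the first ratio, $e^{c_1 b}$ in the second, using $a_1>0>a_2$ and $c_1>0>c_2$) yields
\[\lim_{b\to\infty}G(b)=\frac{(-b_2)a_1 L_{max}}{\delta(a_1-b_2)}-1,\]
since $e^{(c_1+c_2)b}=e^{-2\mu b/\sigma^2}\to 0$ forces the second ratio to vanish.

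Then I would show this limit is strictly negative. Using $a_1a_2=-2\delta/\sigma^2$ together with the characteristic equation $\tfrac{\sigma^2}{2}b_2^2+(\mu-L_{max})b_2-\delta=0$, the inequality $(-b_2)a_1L_{max}<\delta(a_1-b_2)$ reduces after a short algebraic manipulation to the trivial bound $b_2<a_1$, which holds since $b_2<0<a_1$. Combined with $G(0)>0$ and continuity, the intermediate value theorem then yields at least one root $b^*>0$.

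For uniqueness I would establish strict monotonicity of $G$ on $(0,\infty)$. Writing $G+1=F_1+F_2$ for the two rational summands, I would verify that $F_1$ is strictly decreasing, which is the standard monotonicity underlying the classical bounded-rate dividend problem and is confirmable by a direct quotient-rule computation from the signs of $a_1,a_2,b_2$, and that $F_2=\lambda g$ is non-increasing since $g$ rises monotonically from $g(0)=d_2<0$ to $g(\infty)=0$ (verifiable via the sign of $g'$, or equivalently by the concavity of $V_2$ inherited from its penalty-functional interpretation). The main obstacle is precisely this monotonicity step: the two rational expressions of exponentials look unwieldy to differentiate, but the sign data already assembled on the four families of characteristic roots make the required derivative sign analysis routine though tedious.
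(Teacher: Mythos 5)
Your proposal is correct and follows essentially the same route as the paper: compute $G(0)=(-b_2)L_{max}/\delta+\lambda d_2-1>0$, show $\lim_{b\to\infty}G(b)=\frac{a_1(-b_2)}{a_1-b_2}\frac{L_{max}}{\delta}-1<0$, establish strict monotonicity of $G$ (using $\lambda\le 0$ for the second summand), and conclude by the intermediate value theorem. The only cosmetic difference is that you verify the negativity of the limit by reducing it via the characteristic equations to $a_1(a_1-b_2)>0$, where the paper instead invokes the inequality $L_{max}/\delta+1/b_2-1/a_1<0$ from Asmussen and Taksar; both your monotonicity claims (the first ratio strictly decreasing, the $\lambda$-term non-increasing) check out.
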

\begin{proof}
Analogously as in \cite[p.~6, Lemma 4.1]{Zhao2014}, we know that $G(0)=(-b_2)L_{max}/{\delta}+\lambda d_2 -1>0$ by assumption and $G$ is strictly monotonically decreasing since $G'(b)<0$ for all $b\geq 0$. This can be seen using $\lambda<0$. Finally, we have that $$\lim\limits_{b\to \infty}G(b)=\frac{a_1 b_2}{b_2-a_1} \frac{L_{max}}{\delta}-1<0,$$
since $$\frac{a_1 b_2}{b_2-a_1} \frac{L_{max}}{\delta}-1=\frac{a_1 (-b_2)}{a_1-b_2}\left(\frac{L_{max}}{\delta}+ \frac{1}{b_2}-\frac{1}{a_1}\right)<0.$$ 
The last inequality is true because the first factor is positive and $\left(L_{max}/{\delta}+ 1/{b_2}-1/{a_1}\right)<0$, see also \cite[p.~4, Lemma 2.1]{AsmussenTaksar1997}. Altogether, we obtain the desired result.
\end{proof}
Finally, we get the following candidate solution:
\begin{align}\label{explicitvaluefunction}
\nu(r,t,x)=\left\{\begin{array}{lr}
         e^{-\delta(t-r)} A_1(b^*) (e^{a_1 x} - e^{a_2 x}) \\
         + e^{-\beta(t-r)}(-\lambda + C_1(b^*) e^{c_1 x} + (\lambda-C_1(b^*)) e^{c_2 x})\\
          + \lambda(1-\alpha) , & \text{if } 0\leq x< b^*\\
       e^{-\delta(t-r)} (\frac{L_{max}}{\delta} + B_2(b^*) e^{b_2 x}) \\
       +  e^{-\beta(t-r)}(-\lambda +  D_2(b^*) e^{d_2 x}) + \lambda(1-\alpha), & \text{if } x\geq b^*,
        \end{array}\right.
\end{align}
with $\nu(r,t,x)=\nu(r,t,x;b^*) \in \mathcal{C}^{0,1,1}\left([0,\infty) \times [0,\infty) \times[0,\infty)\right)$ on the whole domain since the coefficients are chosen such that \eqref{smoothfitcond1} and \eqref{smoothfitcond2} are fulfilled.

\begin{theorem}\label{thm:sol} For the time-inconsistent stochastic optimal control problem with objective \eqref{objective} and state process \eqref{stateprocess} the function $\nu$ defined in \eqref{explicitvaluefunction} solves the equilibrium HJB equation in Definition \ref{HJB} in case that 
\begin{equation}\label{parametercase1}
(-b_2)\frac{L_{max}}{\delta}+\lambda d_2 -1 > 0.
\end{equation}
Furthermore, the function $V(t,x)=\nu(t,t,x)$ is an equilibrium value function where $b^*$ is the unique positive solution of the equation $G(b)=0$ and 
\begin{align}\label{eqbarrier}
\hat{l}(t,x)=\left\{\begin{array}{lr}
        0, & \text{if } 0\leq x<b^*,\\
        L_{max}, & \text{if } x\geq b^*,
        \end{array}\right.
\end{align}
is the equilibrium control according to Definition \ref{eqlaw}.
\end{theorem}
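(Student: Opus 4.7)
The strategy is to verify that the explicit candidate $\nu$ from \eqref{explicitvaluefunction}, paired with the barrier feedback $\hat l$ from \eqref{eqbarrier}, satisfies every hypothesis of the Verification Theorem \ref{verification}; the three conclusions of Theorem \ref{thm:sol} then follow immediately. First, I address regularity and boundedness. Inside each of the regions $[0,b^*)$ and $(b^*,\infty)$ the spatial dependence of $\nu$ is an exponential polynomial, hence $C^{\infty}$, while the coefficients $A_1(b^*)$, $B_2(b^*)$, $C_1(b^*)$, $D_2(b^*)$ were selected precisely so that the smooth-fit conditions \eqref{smoothfitcond1} and \eqref{smoothfitcond2} hold simultaneously for $V_1$ and $V_2$; this yields $\mathcal{C}^{0,1,1}$ regularity on $[0,\infty)^3$ and $\mathcal{C}^{0,1,2}$ off $\{x=b^*\}$, with at worst a jump of $\partial_{xx}\nu$ at $b^*$. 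Boundedness of $\nu$ and of $\partial_t\nu$, $\partial_x\nu$, $\partial_{xx}\nu$ in $(t,x)\in[0,\infty)^2$ (for each fixed $r$) follows because on $[b^*,\infty)$ only the exponents $b_2,d_2<0$ appear so these terms decay, $[0,b^*]$ is compact, and the time prefactors are bounded by $e^{\delta r}$ and $e^{\beta r}$ for all $t\geq 0$.

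Second, I verify the remaining items of the extended HJB system apart from the supremum condition. Inserting $\nu$ into $\mathcal{A}^{\hat l}\nu^r$ produces two contributions carrying $e^{-\delta(t-r)}$ and $e^{-\beta(t-r)}$ which, together with $H(r,t,\hat l)$, collapse to exactly the linear ODEs \eqref{PDE1} and \eqref{PDE2} solved by $V_1$ and $V_2$; hence \eqref{HJB2} holds pointwise off $\{x=b^*\}$ for every $r\geq 0$. The boundary conditions \eqref{HJB4} (which in turn imply \eqref{HJB3}) are direct: $\nu(r,t,0)=\lambda(1-\alpha)$ because $V_1(0;b^*)=V_2(0;b^*)=0$ by construction, and the limit as $x\to\infty$ matches the prescribed expression since $b_2,d_2<0$. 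Finally, $\hat l$ is a measurable feedback with values in $\{0,L_{max}\}\subset[0,L_{max}]$, so $\mu-\hat l$ is bounded and measurable; conditions $(Z)$ together with the constant volatility $\sigma$ therefore guarantee existence of a strong solution to \eqref{stateprocess}, i.e.\ $\hat l\in\Theta$.

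The only non-routine step is the supremum condition \eqref{HJB1}, which after collecting the $l$-dependent terms reduces to
\begin{equation*}
\partial_x\nu(t,t,x)\geq 1 \text{ for } 0\leq x<b^*,\qquad \partial_x\nu(t,t,x)\leq 1 \text{ for } x\geq b^*,
\end{equation*}
with equality at $b^*$ furnished by the smooth-fit condition \eqref{smoothfitcond3}. On $[b^*,\infty)$ one has $\partial_x\nu(t,t,x)=B_2(b^*)b_2\, e^{b_2 x}+D_2(b^*)d_2\, e^{d_2 x}$, a sum of two positive terms (since $B_2,b_2<0$ and $D_2,d_2<0$), each strictly decreasing to $0$, so the inequality holds throughout. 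On $[0,b^*)$ the situation is more delicate, because $V_1$ and $V_2$ are combinations of exponentials with opposite-signed exponents; the plan is to combine the sign information $A_1(b^*)>0$, $C_1(b^*)<0$, $\lambda-C_1(b^*)<0$ with the orderings of $a_{1,2}$ and $c_{1,2}$ already collected after \eqref{smoothfitcond2} in order to show that $V_1'(\,\cdot\,;b^*)+V_2'(\,\cdot\,;b^*)$ attains its minimum on $[0,b^*]$ at the right endpoint, where by \eqref{smoothfitcond3} it equals $1$. This sign-chasing on $[0,b^*)$ is the step I expect to be the main obstacle; everything else is bookkeeping. Once it is in hand, every hypothesis of Theorem \ref{verification} is met and the conclusions of Theorem \ref{thm:sol} follow.
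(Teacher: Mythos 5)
Your overall architecture matches the paper's proof exactly: both reduce Theorem \ref{thm:sol} to checking the hypotheses of the Verification Theorem \ref{verification}, with the regularity, boundedness, boundary-condition and admissibility checks handled as routine bookkeeping, and with the supremum condition \eqref{HJB1} reduced to the gradient inequalities $\partial_x\nu(t,t,x)\geq 1$ on $[0,b^*)$ and $\partial_x\nu(t,t,x)\leq 1$ on $[b^*,\infty)$. Your treatment of the region $x\geq b^*$ (each term of $B_2(b^*)b_2e^{b_2x}+D_2(b^*)d_2e^{d_2x}$ is positive and decreasing, and the sum equals $1$ at $b^*$) is correct and is essentially the paper's argument in that region.

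However, on $[0,b^*)$ you have only announced a plan, not a proof, and this is precisely the non-trivial step. The sign information you propose to use ($A_1(b^*)>0$, $C_1(b^*)<0$, $\lambda-C_1(b^*)<0$ together with the orderings of $a_{1,2}$, $c_{1,2}$) does not by itself force $V_1'+V_2'$ to attain its minimum on $[0,b^*]$ at the right endpoint: $V_1'$ is increasing-plus-decreasing and $V_2'$ mixes exponentials of both signs with negative coefficients, so a first-order sign chase stalls. The paper closes this gap with a higher-derivative argument: it shows $\partial_x^3 V_1(\cdot\,;b^*)>0$ on $[0,b^*)$ directly, and for $V_2$ it shows $\partial_x^4 V_2(\cdot\,;b^*)<0$ on $[0,b^*)$ (both coefficients $C_1$ and $\lambda-C_1$ are negative) together with an explicit computation giving $\partial_x^3 V_2(b^*-;b^*)>0$, whence $\partial_x^3 V_2>0$ on all of $[0,b^*)$. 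This yields $\partial_x^3\nu(t,t,\cdot)>0$ there, so $\partial_x^2\nu(t,t,x)<\partial_x^2\nu(t,t,b^*-)$; the second derivatives match across $b^*$ (a consequence of \eqref{HJBbarrier} and \eqref{smoothfitcond3}) and are negative to the right of $b^*$, so $\nu(t,t,\cdot)$ is concave on $[0,\infty)$, and concavity combined with $\partial_x\nu(t,t,b^*)=1$ gives the required inequality on $[0,b^*)$. Until you supply this (or an equivalent) argument, the proof of \eqref{HJB1}, and hence of the theorem, is incomplete.
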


\begin{proof}
We have to make sure that $\nu$ in \eqref{explicitvaluefunction} and $\hat{l}$ in \eqref{eqbarrier} satisfy the assumptions for the usage of Theorem \ref{verification}. Using the explicit representation \eqref{eqbarrier} we immediately observe that $\hat{l} \in  \Theta$. Hence, we start with showing that $\nu$ in \eqref{explicitvaluefunction} together with \eqref{smoothfitcond1}, \eqref{smoothfitcond2} and \eqref{smoothfitcond3} solves the extended HJB equation. For that reason it is straightforward to insert $\nu$ into the equation \eqref{HJBbarrier} and observe that it is a solution (using \eqref{smoothfitcond1} and  \eqref{smoothfitcond2}).\\
Further, using the assumption \eqref{parametercase1}, Lemma \ref{lem1} yields that there exists a unique positve $b^*$ such that \eqref{smoothfitcond3} is fulfilled. Moreover, this gives that $\frac{\partial^2}{\partial x^2} \nu (t,t,b^*-)=\frac{\partial^2}{\partial x^2} \nu (t,t,b^*+)$. 
Using \eqref{HJBbarrier} at $(t,t,b^*-)$ and $(t,t,b^*+)$ yields that $-\frac{\sigma^2}{2}\frac{\partial^2}{\partial x^2} \nu (t,t,b^*-)=\mu \frac{\partial}{\partial x} \nu (t,t,b^*-) - \lambda \beta$ and $-\frac{\sigma^2}{2}\frac{\partial^2}{\partial x^2} \nu (t,t,b^*+)=(\mu-L_{max}) \frac{\partial}{\partial x} \nu (t,t,b^*-)+L_{max} - \lambda \beta$. Together with \eqref{smoothfitcond3}, we obtain that $\nu(t,t,x)$ is twice continuously differentiable. Summing up, we have that $\nu(r,t,x) \in \mathcal{C}^{0,1,1}\left([0,\infty)^3 \right) \ \cap \ \mathcal{C}^{0,1,2}\left([0,\infty) \times [0,\infty) \times ([0,\infty)\backslash \{b^*\}) \right)$ in addition to the fact that $\nu(t,t,x)$ is twice continuously differentiable in $x$.\\
\\
Furthermore, we show that $\nu(t,t,x)$ is concave for $x\in [0,\infty)$, \  i.e. $\frac{\partial^2}{\partial x^2} \nu (t,t,x)<0$.
First of all, we immediately observe that $\frac{\partial^2}{\partial x^2} \nu (t,t,x)<0$ for $x\geq b^*$, since $B_2(b^*)<0$ and $D_2(b^*)<0$.
On the other hand, we have that $$\frac{\partial^3}{\partial x^3}V_2(b^*;b^*)= (-\lambda)e^{(c_1+c_2)b^*}(c_1-c_2) \frac{c_1c_2(c_1+c_2)-(c_1^2+c_1c_2+c_2^2)d_2}{^(c_1-d_2)e^{c_1b^*}+(d_2-c_2)e^{c_2b^*}}>0,$$ and since $\frac{\partial^4}{\partial x^4}V_2(x;b^*)<0$ for $0\leq x< b^*$, we obtain that $\frac{\partial^3}{\partial x^3}V_2(x;b^*)>0$ for $0\leq x< b^*$. This, together with $\frac{\partial^3}{\partial x^3}V_1(x;b^*)>0$, yields that $\frac{\partial^3}{\partial x^3}\nu(t,t,x)>0$ for $0\leq x< b^*$. This in turn implies for $0\leq x< b^*$ that  $\frac{\partial^2}{\partial x^2}\nu(t,t,x)< \frac{\partial^2}{\partial x^2}\nu(t,t,b^*)$. Since $\nu(t,t,x)$ is twice continuously differentiable in $x$ we obtain $\frac{\partial^2}{\partial x^2}\nu(t,t,b^*-)=\frac{\partial^2}{\partial x^2}\nu(t,t,b^*+)<0$. Finally, this yields that $\nu(t,t,x)$ is concave.\\

We already know that $\frac{\partial}{\partial x}\nu(t,t,b^*)=1$ for $b^*>0$. Combining this with concavity, we obtain that $\frac{\partial}{\partial x}\nu(t,t,b^*)>1$ for $0\leq x< b^*$ and $\frac{\partial}{\partial x}\nu(t,t,b^*)\leq 1$ for $x\geq b^*$. On top of that, the function $\nu(t,t,x)$ is strictly monotonically increasing since $\frac{\partial}{\partial x}\nu(t,t,x)>0$. For the case $0\leq x< b^*$ this holds because the first derivative is even larger than one. For $x\geq b^*$ we have the desired monotonicity from $\frac{\partial}{\partial x}\nu(t,t,x)= B_2(b^*) b_2 e^{b_2x}+ D_2(b^*) d_2 e^{d_2x}>0$.\\
\\
Altogether, the behaviour of the first and second derivatives of $\nu(t,t,x)$  yields  that, if this function fulfils the equation \eqref{HJBbarrier}, it solves equations \eqref{HJB1} and \eqref{HJB2}. On top of that $\nu(r,t,x)$ is bounded in $t$ and $x$, hence we have that \eqref{HJB4} implies  \eqref{HJB3} as mentioned in Remark \ref{growthcond}. Since $$\lim\limits_{\substack{x\to\infty \\ t\to \infty}} \nu(r,t,x)= \lambda(1-\alpha)$$ for every fixed $r \in [0,\infty)$ we have:
\begin{align*}
&\lim\limits_{w\to \infty} \E_{t,x}\left[ \nu(r,w\wedge \tau^{\hat{l}}, X^{\hat{l}}_{w\wedge \tau^{\hat{l}}})\right]\\
&=\lim\limits_{w\to \infty} \E_{t,x}\left[ \nu(r,w\wedge \tau^{\hat{l}}, X^{\hat{l}}_{w\wedge \tau^{\hat{l}}})  1_{\{\tau^{\hat{l}}< \infty\}} +\nu(r,w\wedge \tau^{\hat{l}}, X^{\hat{l}}_{w\wedge \tau^{\hat{l}}})  1_{\{\tau^{\hat{l}}= \infty\}} \right]\\ 
&=\E_{t,x}\left[\lambda \left(1- \alpha\right)   1_{\{\tau^{\hat{l}}< \infty\}} +\lambda \left(1- \alpha\right) 1_{\{\tau^{\hat{l}}= \infty\}} \right]\\ 
&=\lambda \left(1- \alpha\right).
\end{align*}
The last step, for verifying that $\nu$ and $\hat{l}$ fulfil the requirements of the verification theorem, is to show that $\nu$ and its relevant derivatives are bounded. Since,
\begin{equation*}\frac{\partial}{\partial x}\nu(r,t,x) \leq e^{(\delta \vee \beta) r} e^{-(\delta \wedge \beta) t} \frac{\partial}{\partial x}\nu(t,t,x) \leq e^{(\delta \vee \beta) r} \frac{\partial}{\partial x}\nu(t,t,0),
\end{equation*}
and $0<\nu(t,t,0)<\infty $ is a constant, we obtain that the first derivative in $x$ is bounded. Furthermore, we observe, using $b_2<0$ and $d_2<0$, that  $\nu$ is bounded, and by building the respective derivatives that also $\frac{\partial^2}{\partial x^2}\nu(r,t,x)$ and $\frac{\partial}{\partial t}\nu(r,t,x)$ are bounded.\\
Altogether, we can apply Theorem \ref{verification} and obtain that $\nu$ is the equilibrium value function and $\hat{l}$ is the associated equilibrium control.
\end{proof}

Analogous to Theorem \ref{thm:sol} we can deal with the remaining parameter constellation.
\begin{proposition}
If $(-b_2)L_{max}/{\delta}+\lambda d_2 -1 \leq 0,$ then $b^*=0$ and the equilibrium control becomes $\hat{l}(t,x)=L_{max}$.
The function $\nu$ from \eqref{explicitvaluefunction} now reads as 
\begin{equation*}
\nu(r,t,x)=e^{-\delta(t-r)}\frac{L_{max}}{\delta}(1- e^{b_2 x})+e^{-\beta(t-r)}\lambda(e^{d_2 x}-1)+\lambda(1-\alpha).
\end{equation*}
This function solves the extended HJB equation in Definition \ref{HJB} and $V(t,x)=\nu(t,t,x)$ is an equilibrium value function associated to $\hat{l}(t,x)=L_{max}$.
\end{proposition}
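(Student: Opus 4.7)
The plan is to mirror the argument for Theorem \ref{thm:sol} in the degenerate regime $b^{*}=0$. Under the failure of \eqref{parametercase1} the function $G$ defined in \eqref{eq:G} satisfies $G(0)=(-b_2)L_{max}/\delta+\lambda d_2-1\leq 0$; since $G$ is strictly decreasing on $[0,\infty)$ (as already established in the proof of Lemma \ref{lem1}), no positive root exists. Setting $b^{*}=0$ collapses the region $[0,b^{*})$ in the ansatz \eqref{explicitvaluefunction}, and the surviving ``above threshold'' part, with the appropriate coefficients dictated by the boundary conditions, is exactly the proposed formula for $\nu$.

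First, I verify that $\nu$ solves the extended HJB system of Definition \ref{HJB}. Direct substitution shows that $\nu$ fulfils the second line of \eqref{HJBbarrier} on all of $[0,\infty)^{3}$; the boundary values $\nu(r,t,0)=\lambda(1-\alpha)$ and $\lim_{x\to\infty}\nu(r,t,x)=e^{-\delta(t-r)}L_{max}/\delta-\lambda e^{-\beta(t-r)}+\lambda(1-\alpha)$ required by \eqref{HJB4} are immediate from the explicit form. Thus \eqref{HJB2} holds with $\hat{l}\equiv L_{max}$, and by the argument recalled in Remark \ref{growthcond} the terminal condition \eqref{HJB3} follows from \eqref{HJB4} together with the boundedness of $\nu$.

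The key step is to confirm that the supremum in \eqref{HJB1} is attained at $L_{max}$, which amounts to $\partial_{x}\nu(t,t,x)\leq 1$ for every $x\geq 0$. A straightforward differentiation gives
\[
\frac{\partial}{\partial x}\nu(t,t,x)=-\frac{L_{max}}{\delta}\,b_{2}\,e^{b_{2}x}+\lambda d_{2}\,e^{d_{2}x}.
\]
Since $b_{2}<0$, $d_{2}<0$ and $\lambda<0$, both summands are positive and strictly decreasing in $x$, so this derivative attains its maximum at $x=0$, where it equals $(-b_{2})L_{max}/\delta+\lambda d_{2}\leq 1$ by the standing hypothesis of the proposition. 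Hence $\partial_{x}\nu(t,t,x)\leq 1$ uniformly, the map $l\mapsto l\bigl(1-\partial_{x}\nu(t,t,x)\bigr)$ is maximised by $l=L_{max}$, and \eqref{HJB1} is satisfied by $\hat{l}(t,x)=L_{max}$. This monotonicity observation is the main (but very mild) obstacle; once past it, the remaining pieces are bookkeeping.

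Finally I invoke Theorem \ref{verification}. Because $b^{*}=0$ there is no interior kink, so $\nu\in\mathcal{C}^{\infty}$ on $[0,\infty)^{3}$ and the derivatives $\partial_{t}\nu$, $\partial_{x}\nu$, $\partial_{xx}\nu$ are all bounded in $(t,x)$ for each fixed $r$, since $e^{b_{2}x}$ and $e^{d_{2}x}$ are bounded on $[0,\infty)$ and the time factors admit the majorant $e^{(\delta\vee\beta)r}$ exactly as in the proof of Theorem \ref{thm:sol}. The hypotheses of the verification theorem are therefore met, yielding that $V(t,x)=\nu(t,t,x)$ is an equilibrium value function and that $\hat{l}\equiv L_{max}$ is the associated equilibrium control.
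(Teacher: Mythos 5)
Your proposal is correct and follows essentially the same route as the paper: verify that $\nu$ solves \eqref{HJBbarrier} with $b=0$, show $\frac{\partial}{\partial x}\nu(t,t,x)\leq 1$ for all $x\geq 0$ by combining monotone decrease of the derivative (equivalently concavity) with the value $(-b_2)L_{max}/\delta+\lambda d_2\leq 1$ at $x=0$, check the boundary/boundedness conditions, and invoke Theorem \ref{verification}. The only cosmetic remark is that you write $\lambda<0$ where the standing assumption is $\lambda\leq 0$; for $\lambda=0$ the second summand of the derivative vanishes and the argument goes through unchanged.
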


\begin{proof}
We proceed exactly in the same way as in the proof for Theorem \ref{thm:sol}. It can be verified that $\nu$ solves the equation \eqref{HJBbarrier} with $b=b^*=0$. Furthermore, since $\frac{\partial}{\partial x} \nu(r,t,x)>0$ and $\frac{\partial^2}{\partial x^2} \nu(r,t,x)<0$ for all $(r,t,x) \in [0,\infty) \times[0,\infty) \times[0,\infty)$ in addition to $\frac{\partial}{\partial x} \nu(r,t,0)\leq 1$ for all $(r,t) \in \{(q,s)\in \R^2|0\leq q\leq s < \infty\}$ we obtain that  $$\frac{\partial}{\partial x} \nu(r,t,x)\leq 1$$ for all $(r,t) \in \{(q,s)\in \R^2|0\leq q\leq s < \infty\}$ and $ x \in [0,\infty)$.
Hence, $\nu$ and $\hat{l}(t,x)$ fulfil \eqref{HJB1} and \eqref{HJB2}. Moreover, $\nu$ is bounded and satisfies the correct boundary condition, which yields that it fulfils \eqref{HJB3}. Finally, $\nu \in \mathcal{C}^{0,1,2}\left( [0,\infty)^3\right)$ and it satisfies the extended HJB equation. Since $\nu$, $\frac{\partial}{\partial t} \nu$, $\frac{\partial}{\partial x} \nu$ and $\frac{\partial^2}{\partial x^2} \nu$ are all bounded in $t$ and $x$, the requirements of Theorem \ref{verification} are fulfilled. Consequently, $\nu$ is the equilibrium value function and $\hat{l}(t,x)$ the associated equilibrium control.
\end{proof}

\subsection{Parameter areas}
The arising inequality in Theorem \ref{thm:sol} splits the parameter space into different areas, such that this classification gives rise to an interpretation. First of all, we know that the classical dividend threshold $\bar{b}$ is positive if $(-b_2)L_{max}/{\delta} -1>0$ which is equivalent to 
\begin{equation}\label{inequality_interpretation}
\frac{\frac{\sigma^2}{2}}{\mu}< \frac{L_{max}}{\delta},
\end{equation}
see Schmidli (2008). Otherwise, if \eqref{inequality_interpretation} is not fulfilled, we have that $\bar{b}=0$. Furthermore, note that by the properties of $G(b)$ in \eqref{eq:G} we obtain that the equilibrium dividend threshold $b^*$ fulfills $b^*\geq \bar{b}$.
As a matter of fact, we obtain the following cases, which are also illustrated in Table \ref{tab:inequality}.
\begin{enumerate}
\item If \eqref{inequality_interpretation} is fulfilled and  $\bar{b}>0$, then we obtain also that $b^*>0$. The condition \eqref{inequality_interpretation} can be viewed as an advantageous parameter configuration which motivates the controller to take less risk and to payout dividends only at a surplus level larger than zero in order to prevent ruin for a longer time. The prospect is advantageous since the maximal reachable value of dividends (the right-hand side of \eqref{inequality_interpretation})  is relatively large compared to the variance of the uncontrolled surplus divided by its mean (the left-hand side of \eqref{inequality_interpretation}). Note that $\E(X_t)=\mu t$ and $Var(X_t)=\sigma^2 t$. Moreover, we observe that if $\lambda$ decreases, then the threshold level increases since one wants to prevent ruin and the associated larger penalty even more.

\item Whereas if \eqref{inequality_interpretation} is not fulfilled and hence $\bar{b}=0$, the future prospect is such that the controller in the classical problem ($\lambda=0$) would pay out dividends at any surplus level. Especially if $\mu-L_{max}$ is negative, this strategy can be interpreted as the ruin strategy since ruin is certain in this case. But given that a penalty exists ($\lambda<0$), in case of ruin, we can  distinguish the following two cases:
\begin{itemize}
\item[i)] If $\lambda <\Lambda$ for 
$$\Lambda\coloneqq\left(1+ b_2 \frac{L_{max}}{\delta}\right)\frac{1}{d_2},$$ 
the impact of the penalty is too severe and we obtain that $b^*>0$. This yields that dividends are paid out only for positive surplus levels as in the first case. We observe that the penalty in case of ruin has a stronger deterrent effect and therefore one wants to prevent ruin.

\item[ii)] On the other hand, if $\lambda \geq \Lambda$  meaning that the weight on the penalty is moderate, we obtain that also $b^*=0$. In this case the penalty is not severe enough and the equilibrium strategy suggests to pay out dividends at any surplus level even if it is more likely that ruin occurs compared to the use of a positive threshold level.
\end{itemize}
\end{enumerate}

\begin{table}[ht]
\begin{center}
\begin{tabular}{ccr}
\toprule
$\displaystyle (-b_2)\frac{L_{max}}{\delta} -1> 0 \quad \Rightarrow \quad \bar{b}>0$ & & $\Rightarrow \quad b^*>0$ \\
\midrule
\multirow[c]{2}{*}[-2pt]{$\displaystyle (-b_2)\frac{L_{max}}{\delta} -1\leq 0 \quad \Rightarrow \quad \bar{b}=0$} & & if $\lambda<\Lambda \quad \Rightarrow \quad b^*>0$ \\
\cmidrule{3-3}
 & & if $\lambda \geq \Lambda \quad \Rightarrow \quad b^*=0$ \\
\bottomrule
\end{tabular}
\caption{The nature of the strategy depending on the parameter configuration.}\label{tab:inequality}
\end{center}
\end{table}

\section{Meeting the constraint}\label{sec:constraint}

In the following we discuss the motivating problem, i.e. dividend maximization including the constraint \eqref{constraint}. We try to find a solution which fulfils the constraint  for fixed $(t,x)$, this means in a \emph{precommitment sense}. Up to now we have found for every fixed $\lambda \leq 0$ an equilibrium threshold strategy $b^*_\lambda$ and an associated value function. If
\begin{equation}\label{Lambda}
\lambda< \left(1+ b_2 \frac{L_{max}}{\delta}\right)\frac{1}{d_2}=\Lambda
\end{equation}
there exists a unique equilibrium threshold level $b^*_\lambda >0$ for the associated time-inconsistent problem. If $\lambda\geq  \Lambda$ the equilibrium threshold is $b^*_\lambda=0$. Concerning \eqref{Lambda} note that $d_2<0$.\\
In the case $\beta>0$, we observe from equation \eqref{eq:G}, $G(b)=0$, that if $\lambda \to -\infty$, $b$ tends to $+\infty$.
The factor next to $\lambda$ is negative and consequently monotonically increasing in $b$, so this term vanishes for  $b \to \infty$.
The same holds true if $\beta =0$ and $\mu > L_{max}$.\\
On the other hand, if $\beta=0$ and $\mu \leq L_{max}$, equation \eqref{eq:G0}, now $G_0(b)=0$, yields that $b^*=\bar{b}$, where $\bar{b}$ is the classical threshold level.
Hence, we consider in the following $\beta>0$. The case $\mu > L_{max}$ and $\beta \to 0$ can be analogously treated.\\
But, for $\mu \leq L_{max}$ and $\beta \to 0$, we have for all $\lambda \leq 0$ that $b^*=\bar{b}$. Therefore, the ruin probability is one in this case.
This yields that the constraint cannot be fulfilled and the equilibrium threshold level coincides with the classical threshold level.\\
Focusing on the constraint \eqref{constraint}, we can rewrite $V_2(x;b^*_\lambda)$, which is constructed to solve \eqref{PDE2},
\begin{equation}\label{V2}
\tilde{V}_2(x;b^*_\lambda):=\frac{1}{\lambda}\left(V_2(x;b^*_\lambda)+\lambda \right)=\E_{t,x}\left[e^{-\beta(\tau^{\hat{l}^\lambda}-t)} \right].
\end{equation}
For an arbitrary threshold level $b\geq0$ we define:
\begin{equation}\label{Laplacetransform}
w(x,b):=\E_{t,x}\left[e^{-\beta(\tau^{b}-t)} \right]\left(=\E_{0,x}\left[e^{-\beta \tau^{b}} \right]\right).
\end{equation}
Here $\tau^b=\inf\{s\geq t| X^b_s \leq 0\,, X^b_t=x\}$ and $X^b$ is the process \eqref{stateprocess} with $dL_s=L_{max}1_{\{X^b_s \geq b  \}}ds$.
A solution to equation \eqref{PDE2}, incorporating \eqref{smoothfitcond1} and \eqref{smoothfitcond2}, for $b\geq 0$ has the form $(\lambda w(x,b)-\lambda)$. Hence, we have,
\begin{align*}
w(x,b)=\left\{\begin{array}{lr}
\frac{(d_2-c_2) e^{-c_1 (b-x)}+(c_1-d_2)e^{-c_2 (b-x)}}{(d_2-c_2)e^{-c_1 b}+(c_1-d_2) e^{-c_2 b}}
     , & \text{if } 0\leq x< b,\\
      \frac{(c_1-c_2) e^{d_2 (x-b)}}{(d_2-c_2)e^{-c_1 b}+(c_1-d_2) e^{-c_2 b}}, & \text{if } x\geq b.
        \end{array}\right.
\end{align*}
This can be seen using a Feynman-Kac type argument. According to \eqref{smoothfitcond1} and \eqref{smoothfitcond2}, $w(x,b)$ is continuously differentiable in $x$. For $b_\lambda^*$ the equilibrium threshold with $G(b_\lambda^*)=0$, we have $\tilde{V}_2(x;b^*_\lambda) = w(x,b_\lambda^*)$.
On top of this, if $\mu> L_{max}$, we obtain $$\lim\limits_{\beta\to 0}w(x,0)=\psi(x,L_{max}),$$ from \eqref{ruinprob}.

\subsection{Determining the matching \texorpdfstring{$\lambda$}{TEXT} for \texorpdfstring{$\beta>0$}{TEXT}}
In the following, we can present two propositions which are based on Proposition 3.2 and Proposition 4.1 from \cite{Hernandez2015}. Certainly, our approach also covers the particular situation of $\beta =\delta$. Therefore, we obtain similar results as in \cite{Hernandez2015}, but for the diffusion model with bounded dividend rate.

\begin{proposition}\label{prop:duality1}
If $\Lambda\leq 0$, we have that for all $b>0$ there exists a unique $\lambda < \Lambda$ such that $b$ is the equilibrium threshold level for \eqref{eqbarrier} with parameter $\lambda$.\\
On the other hand, if $\Lambda> 0$, we have that for all $b\geq \bar{b}$, (where $\bar{b}$ is specified in \eqref{classicalthreshold}), there exists a unique $\lambda \leq 0$ such that $b$ is the equilibrium threshold level for \eqref{eqbarrier} with parameter $\lambda$.
\end{proposition}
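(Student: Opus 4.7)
The key structural observation is that the defining equation $G(b)=0$ in \eqref{eq:G} is \emph{affine} in $\lambda$: writing $G(b)=T_1(b)-1+\lambda K(b)$ with
\[T_1(b) := \frac{L_{max}}{\delta}\, \frac{(-b_2)(a_1 e^{a_1 b}-a_2 e^{a_2 b})}{(a_1-b_2)e^{a_1 b}+(b_2-a_2)e^{a_2 b}},\qquad K(b) := \frac{(c_2-c_1)\,d_2\, e^{(c_1+c_2)b}}{(d_2-c_1)e^{c_1b}+(c_2-d_2)e^{c_2 b}},\]
as long as $K(b)\neq 0$ one may simply invert for $\lambda$ to produce the explicit candidate $\lambda(b) := (1-T_1(b))/K(b)$. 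The plan is to use this formula to define the sought $\lambda$, verify that it lies in the prescribed range, and let Lemma \ref{lem1} take care of uniqueness of the corresponding equilibrium threshold.

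The first step is to check $K(b)<0$ for all $b\geq 0$, so that $\lambda(b)$ is well-defined. Direct evaluation gives $K(0)=d_2<0$ and, since $c_1>0>c_2$, one finds $K(b)\to 0^-$ as $b\to\infty$. The absence of zeros in between reduces to the sign of the denominator $(d_2-c_1)e^{c_1 b}+(c_2-d_2)e^{c_2 b}$, which is strictly negative as soon as $c_2<d_2<0<c_1$; this ordering in turn follows from the strict decrease of the map $m\mapsto -m/\sigma^2-\sqrt{m^2/\sigma^4+2\beta/\sigma^2}$ combined with $\mu>\mu-L_{max}$.

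Next I evaluate $\lambda(b)$ at two benchmarks: by construction of $\bar b$ in \eqref{classicalthreshold} one has $T_1(\bar b)=1$, so $\lambda(\bar b)=0$, and $T_1(0)=-b_2 L_{max}/\delta$ gives $\lambda(0)=(1-T_1(0))/d_2=\Lambda$. In the case $\Lambda>0$ (equivalently $\bar b>0$), the strict monotonicity of $T_1$, which is the classical property underlying the positive-barrier regime, implies $T_1(b)\leq 1$ for $b\geq \bar b$ and hence $\lambda(b)\leq 0<\Lambda$; Lemma \ref{lem1} then identifies $b$ as the unique positive equilibrium threshold associated to $\lambda(b)$. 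In the case $\Lambda\leq 0$ (equivalently $\bar b=0$), I still need the sharper conclusion $\lambda(b)<\Lambda$ for every $b>0$. This I would obtain from the implicit function theorem applied to $G(b^*_\lambda,\lambda)=0$: on $\lambda<\Lambda$ the derivative $db^*_\lambda/d\lambda=-K(b^*_\lambda)/\partial_b G(b^*_\lambda)$ is the ratio of two strictly negative quantities (negative by the first step, and negative by the Lemma \ref{lem1} proof), hence $b^*_\lambda$ is strictly decreasing in $\lambda$; its inverse $\lambda(b)$ is therefore strictly decreasing in $b$, and since $\lambda(0)=\Lambda$ one concludes $\lambda(b)<\Lambda$ for every $b>0$, so Lemma \ref{lem1} once more closes the argument.

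The main technical hurdle I expect is the uniform sign bookkeeping for $K$ and $\partial_b G$, since these reduce to comparisons among the characteristic roots $a_i,b_i,c_i,d_i$ coming from the two quadratic equations. Once the monotonicity of $m\mapsto -m/\sigma^2-\sqrt{m^2/\sigma^4+2\beta/\sigma^2}$ settles the required orderings, the rest of the proposition is a direct consequence of the affine dependence of $G$ on $\lambda$ and is essentially algebraic.
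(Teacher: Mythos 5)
Your proposal is correct and follows essentially the same route as the paper: both invert the affine-in-$\lambda$ equation $G(b)=0$ to define $\lambda(b)=(1-N_1(b))/N_2(b)$, evaluate it at $b=0$ (giving $\Lambda$) and at $b=\bar{b}$ (giving $0$), establish strict monotonicity of $\lambda(\cdot)$, and invoke Lemma \ref{lem1} to identify $b$ as the unique equilibrium threshold for that $\lambda$. The only cosmetic difference is that the paper obtains $\lambda'(b)<0$ by differentiating the explicit quotient directly (using $N_1'<0$, $N_2'>0$, $N_2<0$ and $1-N_1(b)\geq 0$ on the relevant range), whereas you route the same sign computation through the implicit function theorem applied to $G(b^*_\lambda,\lambda)=0$.
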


\begin{proof}
Since the relation between $\lambda$ and the equilibrium threshold results from equation \eqref{eq:G}, we define for a given $b\geq0$ the corresponding value of $\lambda$ to be
\begin{equation*}
\lambda(b):=\frac{1-N_1(b)}{N_2(b)}.
\end{equation*}
Where we have that $$N_1(b)=\frac{L_{max}}{\delta} \frac{(-b_2)(a_1 e^{a_1 b} - a_2 e^{a_2 b})}{(a_1-b_2)e^{a_1 b}+ (b_2-a_2)e^{a_2 b}}$$ and $$N_2(b)=\frac{(c_2-c_1) d_2 e^{(c_1+c_2)b}}{(d_2-c_1)e^{c_1b}+(c_2-d_2)e^{c_2 b}}.$$ For these two functions we have that $N_1(b)>0$, $N'_1(b)<0$, $N_2(b)<0$ and  $N'_2(b)>0$.\\
Moreover, if $\Lambda\leq 0$, then $1+b_2 L_{max}/{\delta}=1-N_1(0)\geq0$, which in turn gives that $1-N_1(b)\geq 0$.
Finally, this yields that $\lambda'(b)<0$ and we obtain the first statement, since $\lambda(0)=\Lambda\leq 0$ and $\lambda(b) \to -\infty$ for $b\to \infty$.\\
For the second statement let $\Lambda> 0$, we obtain that $1-N_1\left(\bar{b}\right)=0$ and since $N_1(b)$ is monotonically decreasing, we get that $1-N_1(b)\geq 0$ for all $b\geq \bar{b}$. This along the lines of the classical dividend problem.
As before,  we see that $\lambda'(b)<0$ for all $b\geq \bar{b}$. This, in combination with $\lambda\left(\bar{b}\right)=0$ and $\lambda(b) \to -\infty$ for $b\to \infty$, proves the second assertion.
\end{proof}

\begin{remark}
For fulfilling the required constraint, we have to choose the pair $(b^*,\lambda^*)$ dependent on the current state $x$ (setting $t=0$).
This means that we have a solution of the constrained problem in a precommitment sense. This notion is well explained in \cite{Delong}.
Since we cannot fulfil the constraint $w(x,b)\leq \alpha$ for a finite $b>0$ if $x \leq \bar{x}= \ln(\alpha)/{c_2}$, we only consider the case $x>\bar{x}$. In the special case $x = \ln(\alpha)/{c_2}$ we are only able to meet the constraint if we never pay out dividends, which corresponds to a threshold at infinity.
\end{remark}

\begin{proposition}\label{prop3}
For every given $0<\alpha\leq1$ and every $x>\bar{x}$ there exists a pair $(b^*,\lambda^*)$ with $b^*\geq 0$ and $\lambda^*\leq 0$, such that $b^*$ is the equilibrium threshold level for \eqref{eqbarrier} with parameter $\lambda^*$ and further for this triple $(x,b^*,\lambda^*)$ it holds that 
\begin{align}\label{dual1}
w(x,b^*)\leq \alpha,
\end{align}
and
\begin{align}\label{dual2}
\lambda^*(w(x,b^*)-\alpha)=0.
\end{align}
\end{proposition}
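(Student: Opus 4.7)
The plan is a Lagrangian-type duality argument combined with an intermediate value construction applied to the map $b \mapsto w(x,b)$. Throughout fix $x > \bar{x} = \ln(\alpha)/c_2$, and write $\varphi(b) := w(x,b)$.

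First I would establish three properties of $\varphi$ directly from its closed-form expression: continuity on $[0,\infty)$ (with continuity at $b=x$ inherited from the fit built into \eqref{smoothfitcond1}--\eqref{smoothfitcond2}), strict monotone decrease in $b$, and the limit $\lim_{b\to\infty}\varphi(b)=e^{c_2 x}$. The latter follows because, for $x<b$ large, the exponentials with positive rate $-c_2$ dominate numerator and denominator, and dividing through by $e^{-c_2 b}$ leaves $e^{c_2 x}$; probabilistically this recovers the Laplace transform of the hitting time of $0$ of the uncontrolled diffusion starting at $x$. Since $c_2<0$ and $x>\bar{x}$, this yields $\lim_{b\to\infty}\varphi(b)=e^{c_2 x}<\alpha$.

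Next I set $b^0$ to be the equilibrium threshold associated with $\lambda=0$, which the classification of $G$ earlier in the paper identifies as $\bar{b}$ when $\Lambda>0$ and as $0$ when $\Lambda\leq 0$. I then split into two cases. If $\varphi(b^0)\leq \alpha$, take $(b^*,\lambda^*)=(b^0,0)$: then \eqref{dual1} holds by construction and \eqref{dual2} is immediate since $\lambda^*=0$. If instead $\varphi(b^0)>\alpha$, continuity and strict decrease of $\varphi$ together with $\lim_{b\to\infty}\varphi(b)<\alpha$ yield, by the intermediate value theorem, a unique $b^*>b^0$ with $\varphi(b^*)=\alpha$. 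Proposition \ref{prop:duality1} then produces a unique $\lambda^*\leq 0$ (indeed $\lambda^*<\Lambda$ when $\Lambda\leq 0$) such that $b^*$ is the equilibrium threshold for parameter $\lambda^*$. Equality $w(x,b^*)=\alpha$ gives both \eqref{dual1} and \eqref{dual2}.

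The main obstacle is the strict monotonicity claim $\varphi'(b)<0$. Probabilistically this is transparent: raising the barrier postpones dividend payouts, so the controlled surplus is pathwise larger, ruin is delayed, and the Laplace transform of the ruin time decreases. Making this rigorous by coupling requires separating the regimes in which $X^{b_2}_s-X^{b_1}_s$ lies above or below $b_2-b_1$, since only in the former can one guarantee that the two processes never simultaneously straddle the two different dividend regions. The alternative, which seems cleanest, is direct differentiation of the piecewise closed form of $w(x,b)$ using the signs $c_1>0>c_2$ and $d_1>0>d_2$ recorded earlier. Continuity at $b=x$ and the asymptotic limit are then short verifications.
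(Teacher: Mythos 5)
Your proposal is correct and follows essentially the same route as the paper: split on whether the unconstrained ($\lambda^*=0$) equilibrium threshold ($0$ or $\bar{b}$ according to the sign of $\Lambda$) already satisfies the constraint, and otherwise use the strict decrease of $b\mapsto w(x,b)$ together with $\lim_{b\to\infty}w(x,b)=e^{c_2x}<\alpha$ to locate the unique $b^*$ where the constraint binds, then invoke Proposition~\ref{prop:duality1} to recover $\lambda^*$. The only difference is that you flag and propose to verify the monotonicity $\frac{\partial}{\partial b}w(x,b)<0$, which the paper simply asserts in passing.
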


\begin{proof}
In the following we consider the different cases:\\
 1.) $ \Lambda \leq 0$ with 1a) $ w(x,0)\leq \alpha$ and 1b) $ w(x,0)>\alpha$; on the other hand 2.) $ \Lambda > 0$ with 2a) $ w(x,\bar{b})\leq \alpha$ and 2b) $ w(x,\bar{b})>\alpha$. Moreover, note that  since $x>\bar{x}$ we always have that $\alpha > \lim\limits_{b\to \infty} w(x,b)=e^{c_2 x}$.
Firstly, let $\Lambda\leq 0$ and $w(x,0)\leq \alpha$, then we can take the equilibrium threshold to be $b^*=0$ since the constraint is already fulfilled. Moreover, $\lambda^*$ has to be zero, such that the second part of the objective function vanishes.
Since $b^*=0$, all choices of $\lambda \in [\Lambda,0]$ yield an equilibrium threshold at level zero. Hence, we can take $\lambda^*=0$.\\
If $w(x,0)> \alpha$, and consequently $$w(x,0)> \alpha> \lim\limits_{b\to \infty} w(x,b)=e^{c_2 x},$$ we have  that there exists a unique $b^*>0$ such that $w(x,b^*)=\alpha$ (remember $\frac{\partial}{\partial b} w(x,b)<0$). Using Proposition \ref{prop:duality1}, there exists a unique $\lambda^*<\Lambda$ such that $b^*$ is an equilibrium threshold.\\
Secondly, if $\Lambda>0$ and $w(x,\bar{b})\leq \alpha$, we have to take $b^*=\bar{b}$ such that we can meet the constraint.
As before we can set $\lambda^*=0$ since $\bar{b}$ is the equilibrium threshold level for $\lambda^*=0$. On the other hand, if $w(x,\bar{b})> \alpha$ and hence $$w(x,\bar{b})> \alpha> \lim\limits_{b\to \infty} w(x,b)=e^{c_2x},$$ then there exists a unique $b^*>\bar{b}$ such that $w(x,b^*)=\alpha$.  Due to  Proposition \ref{prop:duality1}, there exists a unique  $\lambda^* <0$, such that $b^*$ is an associated equilibrium threshold.
\end{proof}
Since we have revealed an equilibrium control strategy and the associated equilibrium value function for the objective \eqref{objective}, we cannot use the properties of a maximal reward function as in \eqref{valuefunction}. Hence, standard duality statements do not apply.\\
We summarize our achievements as follows. For a given ruin constraint level $0<\alpha\leq 1$ we can identify the correct threshold level $b^*=b^*(\alpha)$, such that the constraint \eqref{constraint} is fulfilled for a given initial state $x_0>\bar{x}$. Subsequently, we can specify a value $\lambda^*$, such that $b^*$ is the threshold level of an equilibrium control strategy \eqref{eqbarrier} for the equilibrium value function $V(t,x_0)=\nu(t,t,x_0)$ as in \eqref{explicitvaluefunction}.\\
Notice that the equilibrium threshold level and the associated $\lambda^*$ is depending on the current state $x_0$, in such a way that \eqref{dual1} and \eqref{dual2} for this $\alpha$ is only fulfilled at $x_0$.\\
In contrast to \cite{Hernandez2015}, we only obtain one part of the analogous duality statements. Let $\hat{l}^\lambda$ be the equilibrium control for a problem with general parameter $\lambda$, and $\hat{l}^{\lambda^*}$ the corresponding equilibrium control such that the constraint for $\alpha$ is fulfilled in $x_0>\bar{x}:$
\begin{multline*}
\inf_{\lambda\leq 0} J(t,x_0,\hat{l}^\lambda) \leq J(t,x_0,\hat{l}^{\lambda^*})= 
 \E_{t,x_0}\left[\int\limits_t^{\tau^{\hat{l}^{\lambda^*}}} e^{-\delta(s-t)}\hat{l}^{\lambda^*}(s,X_s^{\hat{l}^{\lambda^*}})ds+\lambda^* \left(e^{-\beta(\tau^{\hat{l}^{\lambda^*}}-t)} - \alpha\right) \right]\\
=\E_{t,x_0}\left[\int\limits_t^{\tau^{\hat{l}^{\lambda^*}}} e^{-\delta(s-t)}\hat{l}^{\lambda^*}(s,X_s^{\hat{l}^{\lambda^*}})ds\right]\leq \sup_{L } \E_{t,x_0}\left[\int\limits_t^{\tau^L} e^{-\delta(s-t)} dL_s \right].
\end{multline*}
The last equality holds true, because the penalty term vanishes in $x_0$ for $\hat{l}^{\lambda^*}$ as in \eqref{dual2}.\\
\\
\begin{remark}
The functions $\tilde{V}_2$ and $w$ defined in \eqref{V2} and \eqref{Laplacetransform} characterize the evolution of the constraint, or more precisely, of the level $\alpha$. Suppose that the level $\alpha$, the initial surplus $x_0>\bar{x}$ and the corresponding equilibrium strategy $\hat{l}^{\lambda^*}$ with threshold height $b^*_{\lambda^*}(=b^*(x_0))$ are initially fixed. Then, at some point in time $t>0$ with $\tau^{\hat{l}^{\lambda^*}}(0,x_0)>t$,
the constraint level
\begin{equation*}
\alpha_t:=e^{-\beta t}w\left(X_t^{\hat{l}^{\lambda^*}},b^*(x_0)\right),
\end{equation*}
is reached for $\left(t,X_t^{\hat{l}^{\lambda^*}}\right)$ and original $(x_0,\,\alpha)-$admissible strategy $\hat{l}^{\lambda^*}$. From the Markov property we get that
$\left(\alpha_{t\wedge\tau^{\hat{l}^{\lambda^*}}(0,x_0)}\right)_{t\geq 0}$ is an $\{\mathcal{F}_t\}_{t\geq 0}$-martingale. Consequently, the strategy $\hat{l}^{\lambda^*}$ is the particular equilibrium control for which the constraint level $\alpha_t$ is sharp if the process is restarted at $X_t^{\hat{l}^{\lambda^*}}$. This means that the quality of the strategy is preserved if the constraint is adapted. The martingale property of $\left(\alpha_{t\wedge\tau^{\hat{l}^{\lambda^*}}(0,x_0)}\right)_{t\geq 0}$ shows that these adaptions are \textit{consistent over time}.
\end{remark}

\subsection{Quality of the solution}
As mentioned above we do not obtain classical duality results, nevertheless using our explicit  form we are able to optimize within the class of threshold strategies. Namely, we can rewrite the constrained optimization problem
\begin{equation}\label{constrainedoptimizationproblem}
\sup_{b\geq 0} V_1(x;b), \quad \text{s.t. } w(x,b)\leq \alpha \quad \text{ for fixed } x \in (\bar{x},\infty).
\end{equation} 
Using that $\bar{b}$ is the maximizing argument of the classical problem, together with $\frac{\partial}{\partial b} w(x,b)<0$ for all $b\geq 0$, we obtain for an arbitrary but fixed $x>\bar{x}$ that \eqref{constrainedoptimizationproblem} simplifies to
\begin{equation*}
\sup_{b\geq \tilde{b}} V_1(x;b).
\end{equation*} 
Whereas $\tilde{b}$ is defined as follows
\begin{equation}\label{btilde}
\tilde{b} = \begin{cases}
\bar{b}, \text{ if } w(x,\bar{b})\leq \alpha,\\
b^\prime, \text{ where } w(x,b^\prime)= \alpha.
\end{cases}
\end{equation}
Finally, the optimal threshold of the constrained problem will be this $\tilde{b}$. Since $\tilde{b}\geq \bar{b}$ and $\frac{\partial}{\partial b} V_1(x;b)<0$ for all $b > \bar{b}$. Note, that the optimal threshold $\tilde{b}$ depends on the initial value $x$. If $x\leq \bar{x}$ the constraint cannot be fulfilled.\\
Recalling the proof of Proposition \ref{prop3}, we observe that our chosen equilibrium threshold of the penalized problem coincides with the optimal threshold $\tilde{b}$. This just means that if $w(x,\bar{b}) \leq \alpha$, then we can take $b^*=\bar{b}(=\tilde{b})$ which is the optimal threshold of the constrained problem and $\lambda^*=0$. This yields $\nu(t,t,x;b^*)=V_1(x;b^*)$.\\
If $w(x,\bar{b}) > \alpha$, then we take $b^*=\tilde{b}(>\bar{b})$, which is again the optimal threshold in the constrained problem. In this case, we have to take $\lambda^*<0$ for the penalized reward function such that $b^*$ corresponds to an equilibrium control, but for the initially fixed $x>\bar{x}$. Again we obtain $\nu(t,t,x;b^*)=V_1(x;b^*)$ by Proposition \ref{prop3}.\\
Note, that the constraint is only fulfilled for this initial $x$ and the penalized function $\nu$ will change for a different $x$ since $\lambda^*$ is not zero and depends on $x$. In the end, $\tilde{b}$ from \eqref{constrainedoptimizationproblem} is optimal in the family of threshold strategies, but our analysis shows that for a certain $\lambda^*$ it is also an equilibrium control from the general class of strategies $\Theta$ for the time-inconsistent penalized problem.\\
On top of this, we have computed the optimal threshold for the penalized problem with $\lambda\leq 0$ fixed
\begin{equation}\label{optimumprecommitment}
\sup_{b\geq 0} \nu(t,t,x;b).
\end{equation} 
The optimal threshold will depend on the initial $x$ and the solution is therefore in a  precommitment sense.\\
 Altogether, we illustrate these findings in Figure \ref{fig:areas}, where the dashed red line corresponds to the $x$-dependent optimal threshold of \eqref{optimumprecommitment} for $\lambda = -10$. Furthermore, in Figure \ref{fig:areas} the green area is limited by the green line $\bar{b}$ and by the black line which corresponds to those $b^\prime$ such that $w(x,b^\prime)=\alpha$. Consequently, this boundary of the green area exactly equals our optimal threshold level $\tilde{b}$ as in \eqref{btilde} for different $x$. We can observe that if $x$ is close to $\bar{x}$ we cannot use $\bar{b}$ anymore. Hence, we have to take a threshold which corresponds to the equilibrium threshold for a value $\lambda^*<0$. This is for example illustrated by the blue line for the value $\lambda^*=-10$.
\begin{figure}[htb]\centering
\includegraphics[width=0.9\linewidth]{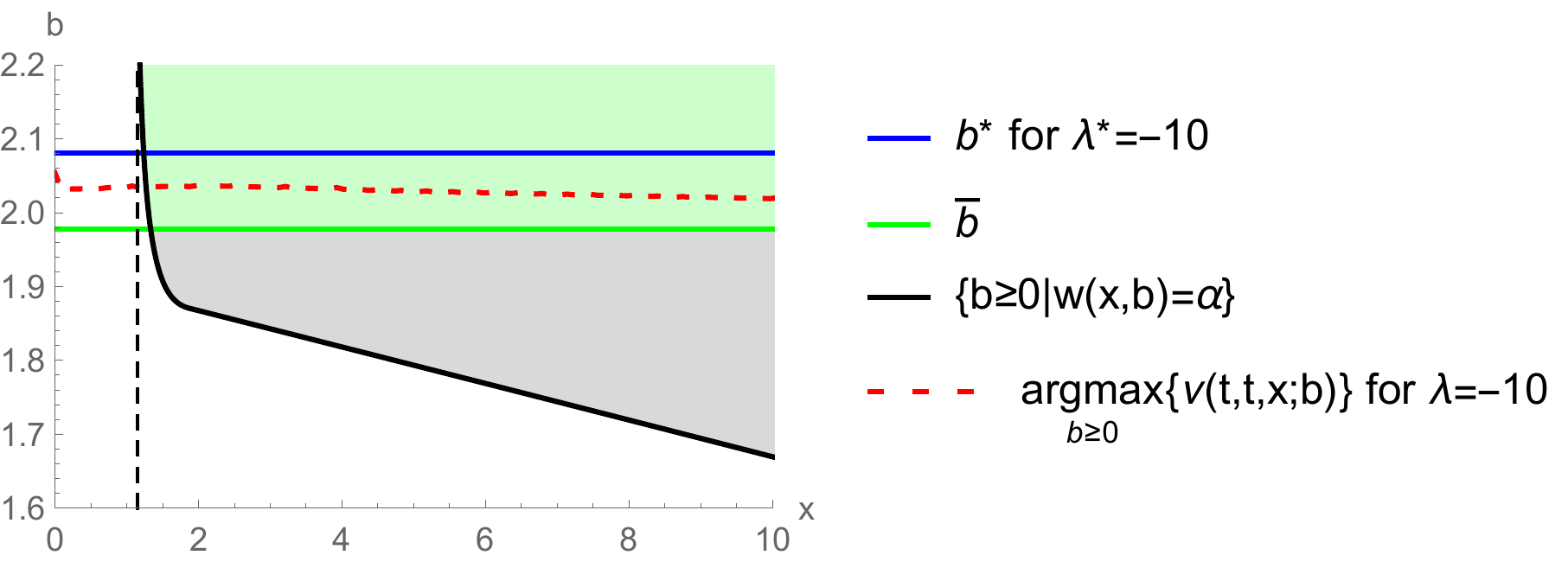}
\caption{The different threshold levels for different initial values. Parameters: $\mu=2$, $\sigma=1$, $\delta=0.1$, $\beta=0.2$, $L_{max}=4$, $\alpha=0.01$.}\label{fig:areas}
\end{figure}

\subsection{Concerning the ruin probability}

As mentioned in Remark \ref{zeroequation}, if we send $\beta$ to zero, there are some differences depending on the relation between $\mu$ and $L_{max}$.
The above procedure can be repeated, if we look at the limit $\beta$ to zero for $\mu>L_{max}$.\\
But, if $\mu\leq L_{max}$, the equation $G_0(b)=0$, see \eqref{eq:G0}, does not depend on $\lambda$ anymore. The second term in $G_0(b)$ vanishes if $\beta$ tends to zero. This yields that for any $\lambda\leq 0$, we obtain the same equilibrium threshold level, namely, the classical threshold $b=\bar{b}$. This is because for any finite threshold strategy, the ruin probability is one. Therefore, $V_2(x)=0$ and $\tilde{V}_2(x)=1$.
Moreover, the classical threshold is the only reasonable solution, since the reward function differs only by a constant from the classical reward function in the time-consistent situation.\\
We are aware of the fact that the reward function could be increased by using a precommitment solution which takes account of the potentially high penalty and has therefore a ruin probability smaller than one. The drawback of such solutions is that one has to restrict (or even specify) the strategies in order to verify the desired optimality in that class of controls.

\section{Graphical illustrations}\label{sec:graph}
In this short section we present a numerical example, which displays the dependence of the equilibrium threshold on the differing discount rate $\beta$ and the parameter $\lambda$. In the corresponding plots one nicely observes that the proposed \emph{type of duality} considerations are highly legitimate. 

\subsection{The case \texorpdfstring{$\mu>L_{max}$}{TEXT}}
In the following, we consider the exemplary parameters displayed in Table \ref{tab:parameters}.
\begin{table}[ht]
\begin{center}
\begin{tabular}{l|l|l|l|l}
$\mu$ & $\sigma$ & $\delta$ & $L_{max}$ & $\alpha$\\
\hline
2 & 1 & 0.1 & 1.9 & 0.5
\end{tabular}
\caption{Parameter set}\label{tab:parameters}
\end{center}
\end{table}
If we do not vary $\beta$ and $\lambda$, we set $\beta=0.2$ and $\lambda=-50$.
The change of the threshold, if we  alter the underlying discount rate $\beta$, is illustrated in Figure \ref{fig:barrier_in_beta_1}. We can observe that if we send $\beta$ to infinity the equilibrium threshold tends to the classical optimal threshold level $\bar{b}$.
 Furthermore, we are able to reproduce the results from \cite{ThonAlb2007}, namely, in our setting we have to take $\beta= \delta$ besides $ \lambda(-\delta)=\tilde{\Lambda}$, where $\tilde{\Lambda}$ denotes the constant in the penalty term of \cite{ThonAlb2007}. The corresponding optimal control threshold is denoted by $b(\delta)$.\\
On the other hand, the dependence on $\lambda$ for a fixed  value of $\beta$ is depicted in Figure \ref{fig:barrier_in_lambda_1}.
The behaviour of the equilibrium threshold for different values of $\lambda$ is similar for positive $\beta$ and for the limit $\beta$ to zero. 
Clearly, this is due to the fact that the equation $G_0(b)=0$ still depends on $\lambda$ if $\mu>L_{max}$, as already mentioned in Remark \ref{zeroequation}.
 
\begin{figure}[htb]\centering
\includegraphics[width=0.75\linewidth]{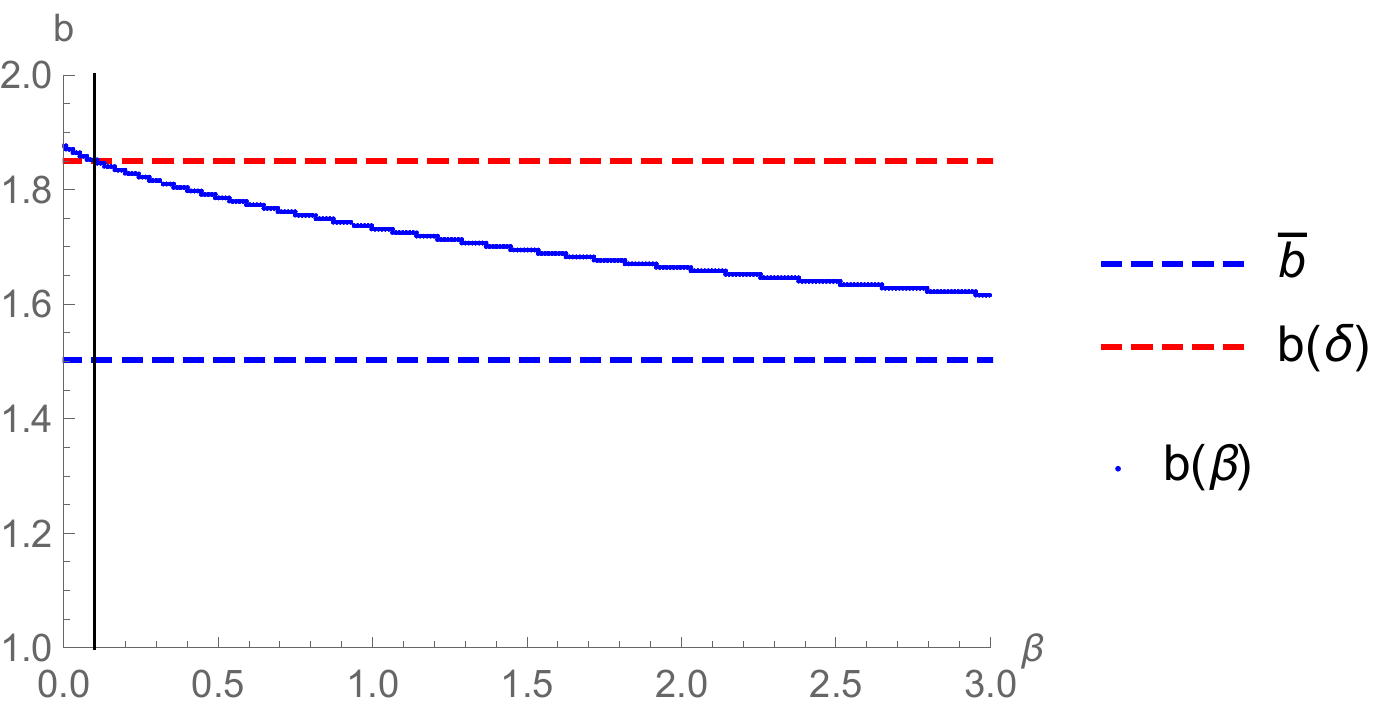}
\caption{We illustrate the dependence of the equilibrium threshold on $\beta$ in the case $\mu>L_{max}$.}\label{fig:barrier_in_beta_1}
\end{figure}

\begin{figure}[htb]\centering
\includegraphics[width=0.75\linewidth]{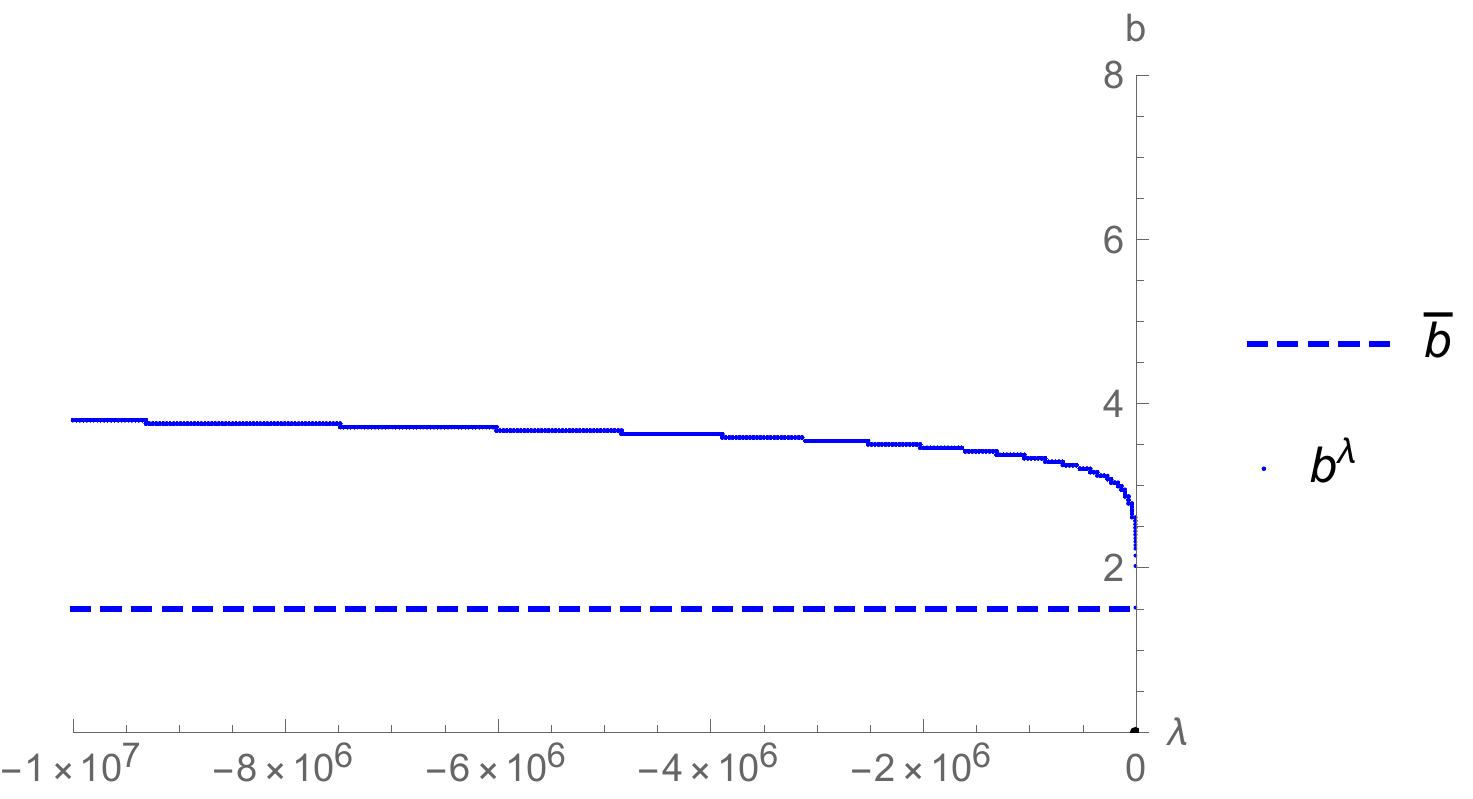}
\caption{The threshold dependent on $\lambda$ in the case $\mu>L_{max}$.}\label{fig:barrier_in_lambda_1}
\end{figure}

\subsection{The case \texorpdfstring{$\mu\leq L_{max}$}{TEXT}}

We consider again the parameters from Table \ref{tab:parameters} with the exception of $L_{max}=4$.
If we send $\beta$ to zero, we obtain that $b^*=\bar{b}$ since the equation $G_0(b)=0$ is independent of $\lambda$ as can be seen in Figure \ref{fig:barrier_in_beta_2}. Moreover, this yields that the solution $b^*$ equals the classical threshold $\bar{b}$ for all $\lambda\leq 0$. As in the above case, the equilibrium threshold tends to the classical optimal threshold level $\bar{b}$ for $\beta$ to infinity. Varying the value of $\lambda$ and keeping $\beta>0$ constant, we obtain a similar behaviour as in the previous case, which can be observed in Figure \ref{fig:barrier_in_lambda_2}. Furthermore, in Figure \ref{fig:barrier_in_lambda_2} (as in Figure \ref{fig:barrier_in_lambda_1} above) we notice that the equilibrium threshold increases very slowly if we decrease $\lambda$. In fact, the equilibrium threshold remains close to the classical threshold $\bar{b}$ even for very small values of $\lambda$, in comparison to the other parameters. This indicates that, even for relatively small $\lambda$, the penalty in case of ruin is not deterrent enough to cause a high threshold level.
\begin{figure}[htb]\centering
\includegraphics[width=0.75\linewidth]{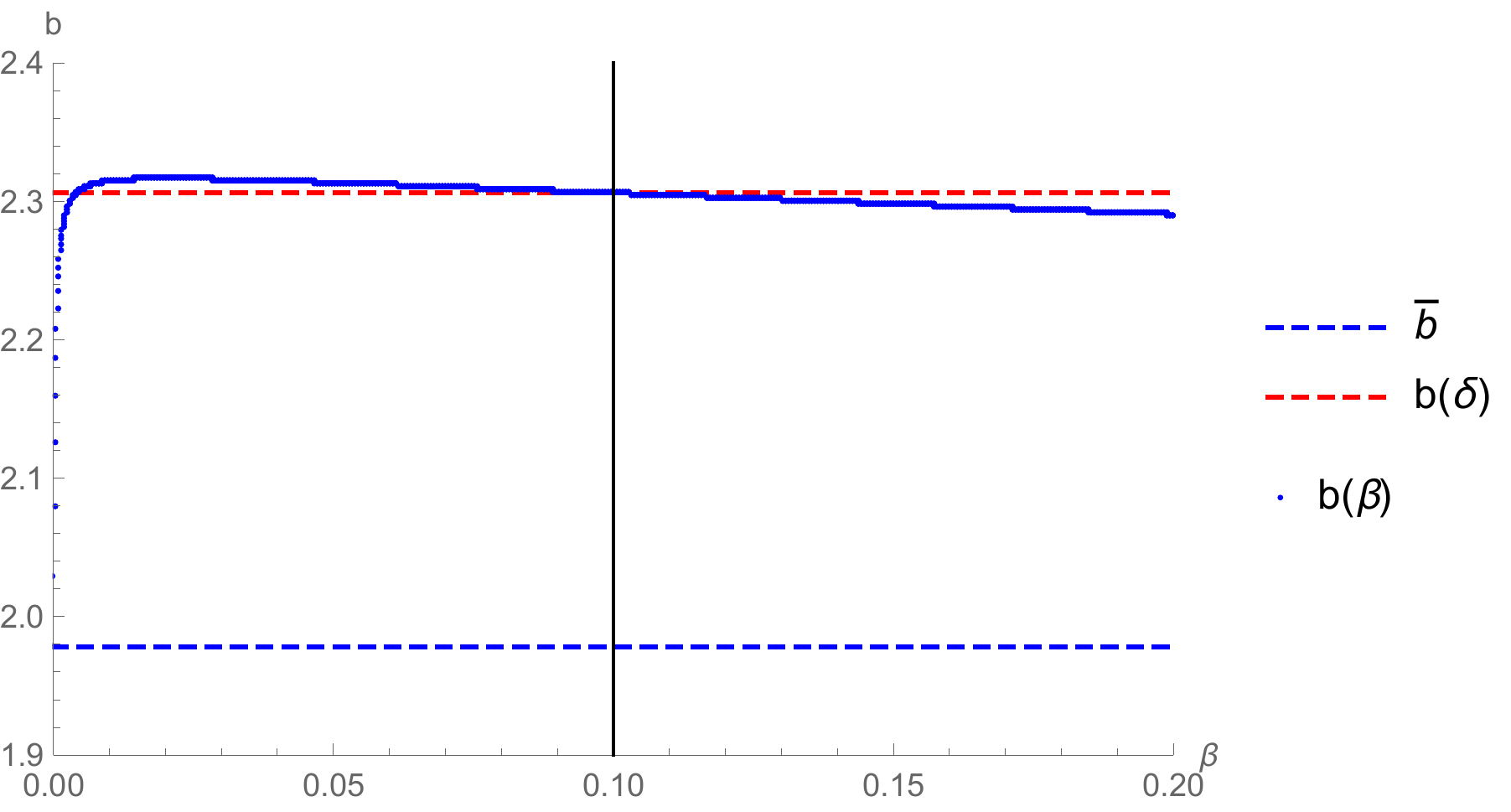}
\caption{Here we illustrate the dependence of the equilibrium threshold on $\beta$ in the case $\mu\leq L_{max}$.}\label{fig:barrier_in_beta_2}
\end{figure}
\begin{figure}[htb]\centering
\includegraphics[width=0.75\linewidth]{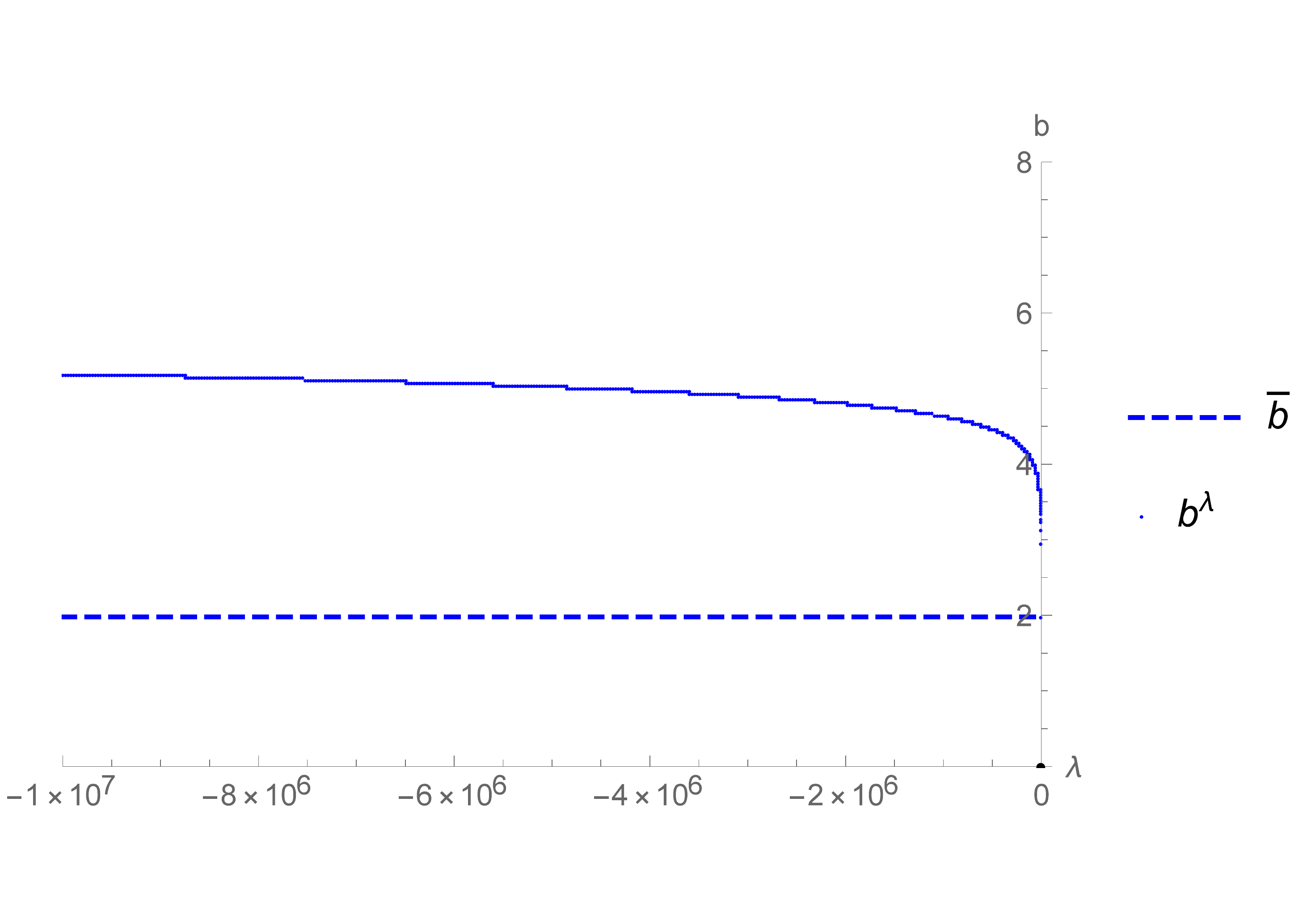}
\caption{The threshold dependent on $\lambda$ in the case $\mu\leq L_{max}$ and for $\beta=0.2$.}\label{fig:barrier_in_lambda_2}
\end{figure}

\section{Conclusion}
In this contribution we solve a time-inconsistent variation of the dividend maximization problem for a diffusion surplus process. This problem arises from imposing a constraint on the Laplace transform of the ruin time in the classical dividend problem. In order to include even a constraint on the ruin probability, we consider different discount rates in the dividend and the penalty term, which in fact leads to the time-inconsistency of the problem.\\
The presented verification theorem, which links an extended system of HJB-equations to an equilibrium dividend strategy and its value function, is applicable for state dependent diffusion coefficients. In the particular case of constant coefficients - the classical diffusion approximation - we are able to construct an explicit solution. This allows us to link the proposed reward function to the constrained dividend problem. In fact, we are able to fulfil the original constraint in a precommitment sense, i.e. for a given initial surplus level. Moreover, the reward function of the problem including a penalty coincides with the reward function of the classical dividend problem at this given initial surplus, since the penalty term vanishes.\\
Finally, the obtained equilibrium strategy is of threshold type, which is based on the restriction to Markovian controls and partly by the focus on an infinite time horizon.
This has the consequence that a constraint on the ruin probability itself can only be feasible if the drift term of the surplus process is larger than the maximal payout rate. Otherwise, the equilibrium is achieved by ignoring the constraint. In conclusion, the time-inconsistent approach allows us to assess the performance of a strategy in terms of an equilibrium over time.
%

\section{Funding Information}
This research was funded in whole, or in part, by the Austrian Science Fund (FWF) P 33317. For the purpose of open access, the author has applied a CC BY public copyright licence to any Author Accepted Manuscript version arising from this submission.

\bibliography{thesis_strini_bibliography}
\bibliographystyle{apalike}

\end{document}